\documentclass[12pt]{article}
\usepackage{amsthm,amsmath}
\usepackage{amscd}
 \usepackage{amssymb}
 \usepackage{hyperref}
 \usepackage[all]{xypic}
 \usepackage{mathtools}
 \usepackage[utf8]{inputenc}
  \newtheorem{lemma}{Lemma}[section]
 \newtheorem{corollary}[lemma]{Corollary}
 
 \newtheorem{theorem}[lemma]{Theorem}
 \newtheorem{proposition}[lemma]{Proposition}
 
 \newtheorem{remark}[lemma]{Remark}

\usepackage[utf8]{inputenc} 

\def\includegraphics{}

\usepackage{tikz,xcolor,hyperref}

\definecolor{lime}{HTML}{A6CE39}
\DeclareRobustCommand{\orcidicon}{
	\begin{tikzpicture}
	\draw[lime, fill=lime] (0,0)
	circle[radius=0.16]
	node[white]{{\fontfamily{qag}\selectfont \tiny \.{I}D}};
	\end{tikzpicture}
	\hspace{-2mm}
}
\foreach \x in {A, ..., Z}{%
	\expandafter\xdef\csname orcid\x\endcsname{\noexpand\href{https://orcid.org/\csname orcidauthor\x\endcsname}{\noexpand\orcidicon}}
}

\usepackage{amscd}
\usepackage{amsmath,amssymb,mathrsfs}
\usepackage{hyperref}
\usepackage[all]{xypic}
\usepackage{times}
\usepackage{inputenc}
\usepackage{titlesec}

\allowdisplaybreaks[1]
\allowdisplaybreaks[4]

 \usepackage{amsthm}

\advance\headheight1.15pt

\newenvironment{proof of Theorem 3.1}{{\noindent\it {\bf Proof of Theorem 3.1}}\quad}{\hfill $\square$\par}
\newenvironment{proof of Theorem 2.13}{{\noindent\it {\bf Proof of Theorem 2.13}}\quad}{\hfill $\square$\par}
\newenvironment{proof of Theorem 4.1}{{\noindent\it {\bf Proof of Theorem 4.1}}\quad}{\hfill $\square$\par}
\newenvironment{proof of Theorem 1.4}{{\noindent\it {\bf Proof of Theorem 1.4}}\quad}{\hfill $\square$\par}
\newenvironment{proof of Theorem 1.5}{{\noindent\it {\bf Proof of Theorem 1.5}}\quad}{\hfill $\square$\par}

\newcommand{\dd}{\mathbb D}

\newcommand{\hd}{H(\dd)}
\newcommand{\hp}{H^p}         \newcommand{\hq}{H^{q}}
 \newcommand{\sn}{\sum_{n=0}^{\infty}}

\newcommand{\hg}{\mathcal H_g}
\newcommand{\h}{\mathcal H}

 \newcommand{\sk}{\sum_{k=0}^{\infty}}

\begin{document}

\baselineskip=8pt
\title{\fontsize{15}{0}\selectfont Generalized Hilbert operators on Hardy spaces}

\author{\fontsize{11}{0}\selectfont
	Yuting Guo$^{a}$  and Pengcheng Tang$^{*,b}$
	\\
	\fontsize{10}{0}\it{
		$^{a}$School of Mathematics and Statistics,
		Hunan First Normal University, Changsha, Hunan 410205, China} 
	\\ \fontsize{10}{0}\it{  $^{b}$School of Mathematics and Statistics, Hunan University of Science and Technology,}
	\\ \fontsize{10}{0}\it{ Xiangtan, Hunan 411201, China}
}

\date{}
\maketitle
\thispagestyle{empty}
\begin{center}

\textbf{\underline{ABSTRACT}}
\end{center}
\ \ \ \ \ Let $g\in H(\mathbb D)$,   the generalized Hilbert operator $\mathcal H_g$ is defined by
\[
\mathcal H_g(f)(z)=\int_0^1 f(t)g'(tz)dt,\ \ z\in \mathbb D\, \ \ f \in  H(\mathbb D).
\]
Let $\mathcal R_p=\mathcal H(H^p)$ be the range of the classical Hilbert operator on Hardy space, equipped with the pullback norm, and let $(\mathcal R_p,H^p)$ denote the  Hadamard multiplier space. For $1<p<\infty$, we prove the exact multiplier characterization
\[
\mathcal H_g:H^{p}\longrightarrow H^{p} \ \ \text{is bounded}
\quad\Longleftrightarrow\quad
g'\in(\mathcal R_p,H^p),
\]
and an equivalent Hilbert-matrix bilinear criterion $\mathfrak B_p(g)<\infty$. We identify the multiplier space completely when $1<p\le2$:
\[
(\mathcal R_p,H^p)=H\left(p,\infty,\frac1{p'}\right).
\]
For $p>2$, we prove that the   multiplier space $(\mathcal R_p,H^p)$  is  strictly contained in $H\left(p,\infty,\frac1{p'}\right)$.  This shows that  \(g\in \Lambda(p,1/p)\) does not imply that $\hg$ is bounded on \(H^p\),  giving a negative answer to the conjecture posed by Galanopoulos, Girela, Pel\'aez and Siskakis.

In addition, we  locate two previously known sufficient classes inside the multiplier space. This allow us obtain a complete coefficient characterization of $\hg$ on $H^{p}$ for  $g \in  H(\mathbb D) $ with nonnegative decreasing Taylor coefficients.
We then study the structure of $(\mathcal R_p,H^p)$. 
We show that  the multiplier spaces $(\mathcal{R}_p,H^p)$ form a strictly increasing family with respect to the exponent $p$.


\begin{flushleft}
{\bf{Keywords:}}   Generalized Hilbert operator. Hardy space.  Hadamard multiplier. Mixed norm space. 
\end{flushleft}
\begin{flushleft}
{\bf{MSC 2020:}}  47B35, 30H10, 30H20.
\end{flushleft}

\let\thefootnote\relax\footnote{$^*$Corresponding Author}
\let\thefootnote\relax\footnote{ Pengcheng Tang: www.tang-tpc.com@foxmail.com}
\let\thefootnote\relax\footnote{ Yuting Guo: rainting2770@163.com }
\vspace{1cm}
%

\section{Introduction} \label{Sec:Intro}

\ \ \  \ \  Let $\mathbb{D}=\{z\in \mathbb{C}:\vert z\vert <1\}$ denote the open unit disk of the complex plane $\mathbb{C}$ and let $H(\mathbb D)$ denote the space of analytic functions in the unit disc.

For $0<p<\infty$ and $f\in H(\mathbb D)$, set
\[
M_p(r,f)=\left(\frac1{2\pi}\int_0^{2\pi}|f(re^{i\theta})|^p\,d\theta\right)^{1/p}.
\]
The Hardy space $H^p$ consists of those $f$ for which
\[
\|f\|_{H^p}=\sup_{0<r<1}M_p(r,f)<\infty.
\]
Throughout this paper, $p'=p/(p-1)$ denotes the conjugate exponent.

The mixed norm space $H(p,q,\alpha)$, $0<p,q\leq \infty$, $0<\alpha<\infty$,  is the space of all
functions $f\in \hd$, for which
$$
||f||_{p,q,\alpha}=\left(\int_{0}^{1}M^{q}_{p}(r,f)(1-r)^{q\alpha-1}dr\right)^{\frac{1}{q}}<\infty, \ \mbox{for}\ 0<q<\infty,
$$
and
$$||f||_{p,\infty,\alpha}=\sup_{0\leq r<1}(1-r)^{\alpha}M_{p}(r,f)<\infty.$$

For $f(z)=\sn a_{n}z^{n}\in H(\dd)$,
the fractional derivative  is defined by
 $$Df(z)=\sum_{n=0}^{\infty}(n+1)a_{n}z^{n}.$$

For $0<p,q\leq \infty$ and $0<\alpha<\infty$, we use  $\mathcal{D}(p,q,\alpha)$ to denote   the space of all analytic
functions $f\in \hd$ such that
$$||Df||_{p,q,\alpha}<\infty.$$

The  folliwing  embeddings can be found, for example,\cite{b1,f2}.
\begin{equation}\label{eq1}
	\mathcal D(p,p,1)\subset H^p\subset\mathcal D(p,2,1),\qquad 1<p\le2,
\end{equation}
and
\begin{equation}\label{eq2}
	\mathcal D(p,2,1)\subset H^p\subset\mathcal D(p,p,1),\qquad 2\le p<\infty.
\end{equation}

The theory of the spaces  $H(p,q,\alpha)$ was originated due to the work of Hardy
and Littlewood \cite{hl} and continued by Flett \cite{f2}. Such spaces arise naturally in the study of coefficient  multipliers on Hardy and weighted Bergman spaces.
Let $X $ and $Y $ be two spaces of analytic functions on the unit disc $\dd$. Let  $f(z)=\sn a_{n}z^{n}\in X$  and $\lambda:=\{\lambda_{n}\}_{n=0}^{\infty}$ be a sequence.
We can define the  multiplier operator  $T_{\lambda}$ as follows,
\begin{equation*}(T_{\lambda}f)(z)=\sn \lambda_{n}a_{n}z^{n}.\end{equation*}
If $T_{\lambda}:X\rightarrow Y$ , then $\lambda$
is said to be a coefficient multiplier or simply multiplier
from $X $ into $Y$. The set of all multipliers from $X$ to $Y$ is denoted by $(X, Y)$.  The   multipliers are closely related to the Hadamard product.

Recall that for $f(z)=\sum_{n=0}^{\infty}a_{n}z^{n}\in \hd$ and $g(z)=\sn b_{n}z^{n}\in \hd$, the Hadamard product of  $f,g\in \hd$ is defined by
$$(f\ast g)(z)=\sum_{n=0}^{\infty}a_{n}b_{n}z^{n}.$$

Multipliers on Hardy spaces were in fashion for a long time and much work was
done on them and related spaces. However,  complete descriptions of
multipliers between Hardy spaces $(\hp,\hq)$ for certain values of $p$  and  $q$ remain still open.  Many results on multipliers of Hardy space $\hp$,  mixed norm space $H^{p,q,\alpha}$ have been established in the last decades. We  refer the readers to the monograph \cite{jev} for more on multipliers and the theory of mixed norm spaces.

 The  Hilbert operator $\mathcal {H}$ defined on $\hd$ as follows:
 If $f(z)=\displaystyle{\sum_{n=0}^{\infty}}a_{n}z^{n}\in \hd$,  then
 \begin{equation*}\label{eq1}
 	\mathcal {H}(f)(z)=\sum_{n=0}^{\infty}\left(\sum_{k=0}^{\infty}\frac{a_{k}}{n+k+1}\right)z^{n}, \ \ z\in \dd,
 \end{equation*}
 whenever the right hand side makes sense and defines an analytic function in $\dd$.
 
 The study of the Hilbert operator $\mathcal{H}$ on  analytic function spaces was initiated by Diamantopoulos and Siskakis in \cite{dia1}. They proved that  $\mathcal {H}:H^{p}\rightarrow H^{p}$ is bounded for $1<p<\infty$ and  $\h$ is not bounded on $H^{1}$.  Subsequently, Diamantopoulus \cite{dia2} also considered the boundedness of $\mathcal {H}$  on the Bergman spaces $A^{p}$. He proved that $\h:A^{p}\rightarrow A^{p}$ is bounded for $2<p<\infty$ and $\h$ is not bounded on $A^{2}$.
 Jevti\'{c} and Karapretovi\'{c} \cite{jk}  investigated  the boundedness of $\h$ on mixed-norm spaces.  The reader is referred to \cite{bh,bell, bok,dos,lnp} for more about  Hilbert operator $\h$ on spaces of analytic functions.
 
 Let $\mathcal R_p:=\mathcal H(H^p)$
 the range of $\h$ on Hardy space $\hp$, equipped with the pullback norm
 \[
 \|\mathcal H(f)\|_{\mathcal R_p}:=\|f\|_{H^p}.
 \]

 For $g\in H(\mathbb D)$, the generalized Hilbert operator introduced in \cite{ann} is
 \begin{equation}\label{eq3}
 	\mathcal H_g(f)(z)=\int_0^1 f(t)g'(tz)\,dt,
 	\qquad z\in\mathbb D.
 \end{equation}
If $g(z)=\log\frac1{1-z}$, then $\hg$ is the classical Hilbert matrix operator.

For any $g\in \hd$, if $f\in \hd $ and $\int_{0}^{1}|f(t)|dt<\infty$, then the integral  in (\ref{eq3}) converges absolutely, and hence $\hg(f)$ is a well defined analytic function in $\dd$. Moreover, $\hg(f)$ has the following expression in terms of Hadamard product:
\begin{align}\label{eq4}
	\hg(f)(z)&=\sn \left((n+1)b_{n+1}\int_{0}^{1}t^{n}f(t)dt\right)z^{n} \nonumber\\
	& = \sn \left((n+1)b_{n+1}\sk \frac{a_{k}}{n+k+1}\right)z^{n}\nonumber\\
	& = g' \ast \h (f).
\end{align}

In  \cite{ann}, the authors  studied the boundedness of $\hg$ on the Hardy spaces $H^{p}$, the Bergman spaces  $A^{p}_{\alpha}$ and on the Dirichlet type spaces $\mathcal{D}_{\alpha}^{p}$.
Pel\'{a}ez and  R\"{a}tty\"{a} \cite{pj} also investigated the generalized Hilbert operator  $\hg$
acting on the weighted Bergman space  $A^{p}_{\omega}$, where $\omega$  belongs to a specific class of regular radial weight functions.
 Galanopoulos and Girela  used the Suchur test in  \cite{rra} to establish the boundedness and compactness of $\hg$ on  the Dirichlet space. The second author of this paper  also investigated the operator  $\hg$ on some spaces of analytic functions such as Dirichlet type space, space of bounded analytic functions, logarithmic Bloch spaces, see \cite{tt}.  The mean Lipschitz space plays a foundational role in these works.

Recall that for $1<p<\infty$ and $0<\alpha<1$, the mean Lipschitz space $\Lambda(p,\alpha)$ admits the following characterization
\begin{equation*}\label{eq:mean-lipschitz-derivative}
	g\in\Lambda(p,\alpha)
	\quad\Longleftrightarrow\quad
	g\in H^p\ \text{and}\ M_p(r,g')=O\bigl((1-r)^{\alpha-1}\bigr).
\end{equation*}
In particular,
\begin{equation*}\label{eq:Lambda-growth}
	g\in\Lambda\left(p,\frac1p\right)
	\quad\Longleftrightarrow\quad
	g'\in H\left(p,\infty,\frac1{p'}\right).
\end{equation*}

 In  \cite{ann}, Galanopoulos, Girela, Pel\'aez and Siskakis proved that, for $1<p\le2$,
 \[
 \mathcal H_g:H^p\to H^p \text{ is bounded}
 \quad\Longleftrightarrow\quad
 g\in\Lambda\left(p,\frac1p\right).
 \]
 For $p>2$, they proved the necessity of $g\in\Lambda(p,1/p)$ and the sufficient condition $g\in\Lambda(q,1/q)$ for some $1<q<p$, and conjectured that the endpoint condition $g\in\Lambda(p,1/p)$ should also be sufficient; see \cite[Theorems 1 and 2]{ann}.

In this paper, we shall prove that the elementary identity $\mathcal H_g(f)=g'*\mathcal H(f)$
yields the exact equivalence
\[
\mathcal H_g:H^p\to H^p \ \text{is bounded}
\quad\Longleftrightarrow\quad
g'\in(\mathcal R_p,H^p).
\]
This reformulation separates the operator from the symbol and turns the problem into the study of a concrete coefficient multiplier space.

Our  main result identifies the multiplier space when $1<p\le2$ and shows that the identification fails sharply for $p>2$:
\[
(\mathcal R_p,H^p)=H\left(p,\infty,\frac1{p'}\right),\qquad 1<p\le2,
\]
whereas
\[
(\mathcal R_p,H^p)\subsetneq H\left(p,\infty,\frac1{p'}\right),\qquad p>2.
\]
The strict inclusion for $p>2$ is verified by the following Hadamard gap  lacunary series
\[
\Phi_p(z)=\sum_{j=2}^\infty 2^{j/p'}z^{2^j}.
\]
We construct a function $f_p\in H^p$ satisfying $\Phi_p*\mathcal H(f_p)\notin H^p$. By integrating $\Phi_p$ , we obtain a symbol $g_p\in\Lambda(p,1/p)$ such that the  operator $\mathcal H_{g_p}$ is unbounded on $H^{p}$. This yields a negative  answer to the conjecture proposed in \cite{ann}.

It turns out that the multiplier space $(\mathcal R_p,H^p)$ play a  crucial role when investigating the boundedness of the  operator  $\hg$ on Hardy spaces. Accordingly, exploring the fundamental properties of multiplier space  $(\mathcal R_p,H^p)$ constitutes the core content of the subsequent sections of this paper. We shall prove the multiplier spaces $(\mathcal{R}_p,H^p)$ form a strictly increasing family with respect to the exponent $p$. We also locate two previously known sufficient classes inside the multiplier space. 
More precisely, for $p>2$, set
\[
\mathcal A_p=\bigcup_{1<q<p}H\left(q,\infty,\frac1{q'}\right),
\qquad
\frac1{t_p}=\frac12-\frac1p, \qquad	\mathcal E_p=H\left(p,t_p,\frac1{p'}\right). 
\]
We  will prove  that $\mathcal A_p\subsetneq(\mathcal R_p,H^p)$. This  provides an alternative proof of Theorem 2 in \cite{ann},  and  also demonstrates that the condition  $g\in \bigcup_{1<q<p}\Lambda(q,1/q)$ is not necessary for the boundedness of  $\hg:H^{p}\longrightarrow \hp$. We also show that $\mathcal E_p\subsetneq(\mathcal R_p,H^p)$. This  not only complements the sufficient condition obtained by Blasco in \cite{Blas2022}, but also provides an explicit proper subspace of $(\mathcal R_p,H^p)$ in a sharp sense.

This paper is organized as follows. In Section 2,  we collect some basic lemmas that will be used in our proof. In  Section 3, 
we  prove the multiplier and bilinear characterizations. The exact identification for $1<p\le2$ and the strict inclusion for $p>2$ are proved in this section. In  Section 4, we locate two previously known sufficient classes inside the multiplier space. We also treats symbols with non-negative decreasing coefficients and gives coefficient consequences.
Section 5 mainly  study the   basic  properties of the  multiplier space $(\mathcal R_p,H^p)$.

Throughout the paper, the letter $C$ will denote an absolute constant whose value depends on the parameters
indicated in the parenthesis, and may change from one occurrence to another. We will use
the notation $``P\lesssim Q"$ if there exists a constant $C=C(\cdot) $ such that $`` P \leq CQ"$, and $`` P \gtrsim Q"$ is
understood in an analogous manner. In particular, if  $``P\lesssim Q"$  and $ ``P \gtrsim Q"$ , then we will write $``P\asymp Q"$.

\section{Preliminaries}\label{sec2}

For $f(z)=\sum_{k=0}^\infty a_kz^k$, define
\[\Delta_nf(z)=\sum_{k=2^n}^{2^{n+1}-1}a_kz^k,
\qquad n\ge0.\]

For $p>1$ and $\alpha>0$, we  will use the  following  dyadic characterization
\begin{equation}\label{eq7}
	f\in H(p,\infty,\alpha)
	\quad\Longleftrightarrow\quad
	\sup_{n\ge0}2^{-n\alpha}\|\Delta_nf\|_{H^p}<\infty.
\end{equation}
More generally, for $1<p<\infty$, $0<t\leq\infty$, $\alpha>0$,
\begin{equation}\label{eq:dyadic-mixed}
	\|f\|_{H(p,q,\alpha)}^q
	\asymp |f(0)|^q+\sum_{n=0}^\infty
	\left(2^{-n\alpha}\|\Delta_nf\|_{H^p}\right)^q.
\end{equation}
These facts follow from the smooth dyadic decomposition of analytic mixed norm spaces; see \cite{mat,pm}.

We shall  use the following  Stieltjes--Mellin inequality.

\begin{lemma}\label{lem2.1}
	Let $1<p<\infty$ and $a\geq0$. If $u\in L^{p}(0,1)$, then
	\begin{equation*}\label{eq:stieltjes}
		\int_0^1x^{ap}\left|\int_0^1\frac{u(y)}{(x+y)^{1+a}}\,dy\right|^pdx
		\lesssim \int_0^1|u(y)|^p\,dy.
	\end{equation*}
\end{lemma}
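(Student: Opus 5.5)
The operator $Tu(x)=x^{a}\int_0^1 \frac{u(y)}{(x+y)^{1+a}}\,dy$ has a kernel homogeneous of degree $-1$, so we apply Schur's test with power weights. This is the classical Hardy–Hilbert type argument. Since the kernel $K(x,y)=x^{a}(x+y)^{-1-a}$ is nonnegative, we may assume $u\ge0$ and prove
\[
\int_0^1 |Tu(x)|^p\,dx\lesssim \int_0^1 u(y)^p\,dy .
\]
We may also extend $u$ by zero and work on $(0,\infty)$, which only enlarges the left side.

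The first route is Minkowski's integral inequality after the substitution $y=xs$:
\[
Tu(x)=\int_0^\infty \frac{u(xs)}{(1+s)^{1+a}}\,ds .
\]
Since $\|u(\cdot\, s)\|_{L^p(0,\infty)}=s^{-1/p}\|u\|_{L^p}$, this gives
\[
\|Tu\|_{L^p}\le \|u\|_{L^p}\int_0^\infty \frac{s^{-1/p}}{(1+s)^{1+a}}\,ds .
\]
The integral converges, because $s^{-1/p}$ is integrable at $0$ for $p>1$, and at infinity the integrand decays like $s^{-1-a-1/p}$, which is integrable since $a\ge0$ and $1/p>0$. Its value is $B(1-1/p,\,a+1/p)$, a finite constant depending only on $p$ and $a$. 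This already proves the lemma.

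If one prefers the Schur test, take the test function $h(t)=t^{-1/(pp')}$ and check
\[
\int_0^\infty K(x,y)h(y)^{p'}\,dy\lesssim h(x)^{p'},\qquad \int_0^\infty K(x,y)h(x)^{p}\,dx\lesssim h(y)^{p}.
\]
Both reduce, by homogeneity, to Beta integrals. The second one requires convergence of $\int_0^\infty \sigma^{a-1/p'}(1+\sigma)^{-1-a}\,d\sigma$. This holds because $a-1/p'>-1$ near $0$, and near infinity the integrand decays like $\sigma^{-1-1/p'}$.

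There is essentially no obstacle. The only point needing care is the convergence of the Beta integral at both endpoints. This is exactly where $p>1$ (integrability of $s^{-1/p}$ at $0$) and $a\ge 0$ (decay at infinity) are used.
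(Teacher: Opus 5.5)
Your first route is correct and is essentially the paper's own proof: extend $u$ by zero, substitute $y=xs$, apply Minkowski's integral inequality using $\|u(\cdot\,s)\|_{L^p(0,\infty)}=s^{-1/p}\|u\|_{L^p}$, and note that the constant $\int_0^\infty s^{-1/p}(1+s)^{-1-a}\,ds=B(1/p',\,a+1/p)$ is finite. Your optional Schur-test check with $h(t)=t^{-1/(pp')}$ is also correct, but the lemma does not need it.
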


\begin{proof}
	Extend $u$ by zero to $(1,\infty)$, so that
	$u\in L^p(0,\infty)$, and define
	\[
	S_a u(x)
	=
	x^a\int_0^\infty
	\frac{u(y)}{(x+y)^{1+a}}\,dy ,
	\qquad x>0 .
	\]
	Making the change of variables $y=xs$, we obtain
	\[
	S_a u(x)
	=
	x^a\int_0^\infty
	\frac{u(xs)x\,ds}{(x+xs)^{1+a}}
	=
	\int_0^\infty
	\frac{u(xs)}{(1+s)^{1+a}}\,ds .
	\]
	Hence, by Minkowski's integral inequality,
	\begin{align*}
		\|S_a u\|_{L^p(0,\infty)}
		&\leq
		\int_0^\infty
		\frac{\|u(xs)\|_{L_x^p(0,\infty)}}{(1+s)^{1+a}}\,ds .
	\end{align*}
	For $s>0$, a change of variables $t=xs$ gives
	\[
	\|u(xs)\|_{L_x^p(0,\infty)}
	=
	s^{-1/p}\|u\|_{L^p(0,\infty)}.
	\]
	Therefore,
	\begin{align*}
		\|S_a u\|_{L^p(0,\infty)}
		&\leq
		\|u\|_{L^p(0,\infty)}
		\int_0^\infty
		\frac{s^{-1/p}}{(1+s)^{1+a}}\,ds .
	\end{align*}
	The last integral is finite. Indeed, near the origin the integrand
	behaves like $s^{-1/p}$, which is integrable since $p>1$, while at
	infinity it behaves like $s^{-1/p-1-a}$,
	which is integrable for every $a\geq0$.
	
	Thus,
	\[	\|S_a u\|_{L^p(0,\infty)}	\lesssim	\|u\|_{L^p(0,\infty)}.	\]
	Restricting the $x$-integration to $(0,1)$ and using the fact that
	$u$ vanishes on $(1,\infty)$, we obtain, for $a\geq0$,
	\[\int_0^1	\left|	x^a	\int_0^1	\frac{u(y)}{(x+y)^{1+a}}\,dy	\right|^p dx
	\lesssim	\int_0^1 |u(y)|^p\,dy .	\]
	Equivalently,
	\[	\int_0^1 x^{ap}
	\left|	\int_0^1	\frac{u(y)}{(x+y)^{1+a}}\,dy	\right|^p dx
	\lesssim	\int_0^1 |u(y)|^p\,dy .	\]
	This proves the lemma.
\end{proof}

We need  the following lemma, which can be found in \cite[Page 232, Theorem 4.14]{zg}.

\begin{lemma}\label{lem2.2}
	Let  $1<p<\infty$ and let  $f(z)=\sn a_{n}z^{n}\in H^{p}$. 	Suppose that $\{\lambda_{n}\}_{n=0}^{\infty}$ is a sequence of complex numbers such that 
	$$|\lambda_{n}| \lesssim 1, \ \  \sum_{k=2^{n}}^{2^{n+1}-1}|\lambda_{k+1}-\lambda_{k}| \lesssim 1, \ \ n=0,1,2,\cdots.$$
	Then there exists a positive constant  $C_{p}$ such that 
	$$h(z)=\sn \lambda_{n}a_{n}z^{n}\in H^{p} \ \ \ \mbox{and}\ \ ||h||_{H^{p}}\leq C_{p}||f||_{H^{p}}.$$
\end{lemma}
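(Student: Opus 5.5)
The plan is to prove the statement (the classical Marcinkiewicz multiplier theorem for power series) by combining Littlewood--Paley theory on the circle with summation by parts inside each dyadic block. It suffices to treat polynomials $f$: the estimate then extends to all of $H^p$ by density and Fatou's lemma. We may also assume $\lambda_0,\lambda_1$ are harmless, since the first two coefficients contribute at most a bounded multiple of $\|f\|_{H^p}$.

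\textbf{Step 1 (Littlewood--Paley).} First I recall the Littlewood--Paley inequality on the boundary: for $1<p<\infty$,
\[
\|F\|_{H^p}\asymp |F(0)|+\Big\|\Big(\sum_{n\ge0}|\Delta_nF(e^{i\theta})|^2\Big)^{1/2}\Big\|_{L^p}.
\]
This reduces the lemma to bounding the square function of $h$ by that of $f$.

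\textbf{Step 2 (Abel summation in each block).} Inside each block, with $N=2^{n+1}-1$ and partial block sums $S_{n,k}f=\sum_{j=2^n}^{k}a_jz^j$, Abel summation gives
\[
\Delta_nh=\lambda_{N}\,\Delta_nf-\sum_{k=2^n}^{N-1}(\lambda_{k+1}-\lambda_k)\,S_{n,k}f .
\]
Put $V_n=|\lambda_N|+\sum_{k=2^n}^{N-1}|\lambda_{k+1}-\lambda_k|$; by hypothesis $V_n\lesssim1$ uniformly in $n$. Applying Cauchy--Schwarz with the weights $|\lambda_N|$ and $|\lambda_{k+1}-\lambda_k|$ yields
\[
|\Delta_nh|^2\lesssim |\Delta_nf|^2+\sum_{k}|\lambda_{k+1}-\lambda_k|\,|S_{n,k}f|^2 .
\]

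\textbf{Step 3 (vector-valued estimate).} It therefore remains to show
\[
\Big\|\Big(\sum_{n}\sum_{k}|\lambda_{k+1}-\lambda_k|\,|S_{n,k}f|^2\Big)^{1/2}\Big\|_{L^p}\lesssim\Big\|\Big(\sum_n|\Delta_nf|^2\Big)^{1/2}\Big\|_{L^p}.
\]
Each $S_{n,k}f$ is obtained from $\Delta_nf$ by a frequency cut-off at $k$. Such a cut-off is a combination of the identity and modulated Riesz projections $e^{ik\theta}P_+\big(e^{-ik\theta}\,\cdot\big)$, so it is a modulated conjugate-function operator applied to $\Delta_nf$. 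The vector-valued (Marcinkiewicz--Zygmund) extension of M.~Riesz's theorem states that
\[
\Big\|\Big(\sum_j|\tilde H g_j|^2\Big)^{1/2}\Big\|_{L^p}\lesssim\Big\|\Big(\sum_j|g_j|^2\Big)^{1/2}\Big\|_{L^p}
\]
for the conjugate function $\tilde H$. I apply it to the family $g_{n,k}=|\lambda_{k+1}-\lambda_k|^{1/2}e^{-ik\theta}\Delta_nf$. Since $|e^{-ik\theta}|=1$ and $\sum_k|\lambda_{k+1}-\lambda_k|\lesssim1$ for each $n$, the right-hand side is controlled by the square function of $f$. Combining with Step 1 gives $\|h\|_{H^p}\le C_p\|f\|_{H^p}$, with $h$ in $H^p$ because its boundary function lies in $L^p$ and has nonnegative spectrum.

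I expect Step 3 to be the main obstacle: it is where the bounded-variation condition is really used, and it needs the vector-valued conjugate-function inequality rather than the scalar M.~Riesz theorem. This is the argument of \cite[Theorem 4.14]{zg}, which the statement cites.
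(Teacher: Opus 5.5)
Your proposal is correct and follows essentially the same route as the source: the paper gives no proof of this lemma and only cites \cite[Theorem 4.14]{zg}, whose proof of the Marcinkiewicz multiplier theorem is exactly the combination you use (Littlewood--Paley dyadic decomposition, Abel summation inside each block with Cauchy--Schwarz, and the Marcinkiewicz--Zygmund vector-valued M.~Riesz inequality for the partial sums $S_{n,k}f$). Two details are cosmetic and are absorbed by the same vector-valued estimate: the cut-off at $k$ is $\Delta_nf-e^{i(k+1)\theta}P_+\bigl(e^{-i(k+1)\theta}\Delta_nf\bigr)$, so the modulation index is $k+1$ rather than $k$; and $P_+=\tfrac12(I+i\tilde H)$ plus a rank-one mean-value term.
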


	 The following result is essential and can be
	 proved by modifying the proof of Theorem 5.5 in  \cite{b1}.
	 
	\begin{lemma}\label{lem2.3}
		Let $1<p<\infty$, $h\in H(\mathbb D)$, and $0<r<R<1$. Then
		\[
		M_p(r,h')\le \frac{C_p}{R-r}M_p(R,h).
		\]
		In particular, if $h\in H(p,\infty,\alpha)$, then
		\[
		M_p(r,h')\lesssim 
		\|h\|_{H(p,\infty,\alpha)}(1-r)^{-1-\alpha}.
		\]
	\end{lemma}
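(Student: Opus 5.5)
We plan to follow the standard route for Cauchy-type estimates: first represent $h'$ on the circle of radius $r$ through the values of $h$ on the larger circle $|z|=R$, and then use the $L^p$-boundedness of a suitable convolution operator. We cannot simply differentiate the Poisson integral, since that gives a kernel bound of order $(R-r)^{-1}$ only in $L^1$. The cleanest approach is therefore a rescaling to the unit circle, which avoids any dependence on $p>1$ beyond the trivial case.

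Set $\rho=r/R\in(0,1)$ and $h_R(z)=h(Rz)$. Then $h'(r e^{i\theta})=R^{-1}h_R'(\rho e^{i\theta})$, and $h_R$ is analytic on a neighbourhood of $\overline{\mathbb D}$ with boundary values $h(Re^{i\theta})$. By the Cauchy formula,
\[
h_R'(\rho e^{i\theta})=\frac1{2\pi}\int_0^{2\pi}h_R(e^{it})\,\frac{e^{it}}{(e^{it}-\rho e^{i\theta})^2}\,dt ,
\]
which is a convolution of $h_R(e^{i\cdot})$ with the kernel $K_\rho(s)=e^{is}/(e^{is}-\rho)^2$, after the obvious rotation. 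Young's inequality (Minkowski's inequality) then gives
\[
M_p(\rho,h_R')\le \|K_\rho\|_{L^1}\,M_p(1,h_R),
\]
where $\|K_\rho\|_{L^1}$ denotes the normalized $L^1$ norm. The kernel norm is
\[
\frac1{2\pi}\int_0^{2\pi}\frac{ds}{|1-\rho e^{is}|^2}=\frac1{1-\rho^2}\le\frac1{1-\rho}.
\]
Hence
\[
M_p(r,h')\le \frac{1}{R}\cdot\frac{R}{R-r}\,M_p(R,h)=\frac{M_p(R,h)}{R-r}.
\]
This proves the first inequality with $C_p=1$, valid for every $p\ge1$.

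For the second claim we take $R=(1+r)/2$, so that $R-r=(1-r)/2$ and $1-R=(1-r)/2$. Then
\[
M_p(R,h)\le \|h\|_{H(p,\infty,\alpha)}(1-R)^{-\alpha}=2^{\alpha}\|h\|_{H(p,\infty,\alpha)}(1-r)^{-\alpha},
\]
and combining this with the first inequality gives $M_p(r,h')\lesssim \|h\|_{H(p,\infty,\alpha)}(1-r)^{-1-\alpha}$.

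The only point needing care is the exact evaluation of the $L^1$ norm of $K_\rho$. This follows from the Parseval identity
\[
\frac1{2\pi}\int_0^{2\pi}|1-\rho e^{is}|^{-2}\,ds=\sum_{n\ge0}\rho^{2n}=\frac1{1-\rho^2}.
\]
No step is genuinely hard; the argument is a direct replacement for modifying \cite[Theorem 5.5]{b1}.
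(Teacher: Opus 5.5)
Your proof is correct. It is the standard Cauchy-formula-plus-Minkowski (Young) argument behind \cite[Theorem 5.5]{b1}, to which the paper simply defers without writing out a proof; the rescaling to the unit circle is only cosmetic, it gives the explicit constant $C_p=1$ for all $p\ge1$, and the choice $R=(1+r)/2$ correctly yields the second estimate. Your motivating remark about the Poisson integral is off the mark: an $L^1$ bound on the kernel is exactly what Young's inequality needs, and differentiating the Poisson integral in $\rho$ also gives a kernel of $L^1$ norm $O\bigl((1-\rho)^{-1}\bigr)$. That remark plays no role in the argument, though.
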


	 We also need  the following embedding, see \cite{b1,mat}.
	 
	 \begin{lemma}\label{lem2.4}
	 	Let $1<q<p<\infty$. Then 
	 	\begin{equation*}\label{eq:sobolev-q-p}
	 		\|f\|_{H^p}^p
	 		\lesssim |f(0)|^p+
	 		\int_0^1(1-r)^{p/q'}M_q(r,f')^p\,dr.
	 	\end{equation*}
	 \end{lemma}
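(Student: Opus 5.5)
The plan is to deduce the inequality from the classical Hardy--Littlewood--Flett embedding between mixed norm spaces of different integrability exponents. Concretely, for $0<q<p<\infty$ we have
\[
H\Bigl(q,p,\tfrac1q-\tfrac1p\Bigr)\subset H^p ,
\]
that is, $\|f\|_{H^p}^p\lesssim \int_0^1 (1-r)^{p(1/q-1/p)-1}M_q(r,f)^p\,dr$. This is Hardy--Littlewood's theorem, proved in Flett \cite{f2} and in \cite{b1,mat}, and it is what I would invoke. We also need the derivative characterization of mixed norm spaces: for $\gamma>0$,
\[
\int_0^1(1-r)^{p\gamma-1}M_q(r,f)^p\,dr\asymp |f(0)|^p+\int_0^1(1-r)^{p(\gamma+1)-1}M_q(r,f')^p\,dr .
\]
The direction $\lesssim$ follows by writing $f(re^{i\theta})-f(0)=\int_0^r f'(se^{i\theta})e^{i\theta}\,ds$. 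One then applies Minkowski's inequality in $L^q(d\theta)$ and a Hardy inequality in the radial variable. This is valid because $\gamma>0$, and also when $q<1$ after the standard modification.

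The key steps are as follows. Put $\gamma=\frac1q-\frac1p>0$. The Hardy--Littlewood--Flett embedding together with the derivative characterization give
\[
\|f\|_{H^p}^p\lesssim |f(0)|^p+\int_0^1(1-r)^{p+\frac pq-2}M_q(r,f')^p\,dr .
\]
Now compare exponents. The weight in the statement is $(1-r)^{p/q'}=(1-r)^{p-p/q}$. The difference between the two exponents is
\[
\Bigl(p+\frac pq-2\Bigr)-\Bigl(p-\frac pq\Bigr)=2\Bigl(\frac pq-1\Bigr)>0 .
\]
Hence $(1-r)^{p+p/q-2}\le (1-r)^{p/q'}$ on $[0,1)$. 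The right-hand side obtained above is therefore dominated by the right-hand side of the lemma, and the inequality follows.

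The main obstacle is the embedding $H(q,p,1/q-1/p)\subset H^p$ itself. If one wanted to prove it rather than cite it, I would first apply the pointwise estimate $M_p(r,h)\lesssim (R-r)^{-(1/q-1/p)}M_q(R,h)$, which is obtained by the Poisson kernel and Young's inequality, to the dyadic blocks $\Delta_n f$. One can then bound $\|\Delta_nf\|_{H^p}\lesssim 2^{n(1/q-1/p)}M_q(1-2^{-n},\Delta_nf)$ and use the dyadic description \eqref{eq:dyadic-mixed}. The delicate point is summing the blocks in $H^p$ with only $\ell^p$ control; here one uses Littlewood--Paley theory when $p\ge2$ and the sharper Flett argument in general. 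Since the statement is weaker than the sharp embedding by a positive power of $(1-r)$, there is some slack. This slack would allow a cruder summation, for instance via H\"older's inequality over dyadic blocks, if one wishes to avoid the full sharp theorem.
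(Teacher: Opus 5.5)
Your proof rests on a misstated form of the Hardy--Littlewood--Flett theorem, and the error is substantive. The embedding $H(q,p,\tfrac1q-\tfrac1p)\subset H^p$ is false. For every $\gamma>0$ and $p<\infty$ one has $H^q\subset H(q,p,\gamma)$, because $M_q(r,f)\le\|f\|_{H^q}$ and $\int_0^1(1-r)^{p\gamma-1}\,dr<\infty$. But $H^q\not\subset H^p$ when $q<p$. Consequently the intermediate inequality you derive, with weight $(1-r)^{p+p/q-2}$, is also false. Take $p=4$, $q=2$, $f(z)=(1-z)^{-1/2}$. Then $M_2(r,f')\asymp(1-r)^{-1}$, so $\int_0^1(1-r)^{4}M_2(r,f')^4\,dr<\infty$, yet $f\notin H^4$. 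The derivative characterization you quote is correct for $\gamma>0$; the failure lies in the first embedding and in the exponent bookkeeping built on it. In particular, the final comparison of weights is valid logic applied to a false premise.

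In the correct theorem the smoothness index has the opposite sign. Roughly, $f\in H^p$ once $f$ has $\tfrac1q-\tfrac1p$ derivatives measured in $M_q$ with an $L^p$ radial norm. Concretely, the condition is $\int_0^1(1-r)^{p(1-\frac1q+\frac1p)-1}M_q(r,f')^p\,dr<\infty$. Since $p(1-\tfrac1q+\tfrac1p)-1=p/q'$, this is exactly the lemma, so there is no slack at all. Indeed, for $f=(1-z)^{-1/p}\notin H^p$ one gets $\int_0^1(1-r)^{p/q'+\varepsilon}M_q(r,f')^p\,dr<\infty$ for every $\varepsilon>0$, so the weight cannot be improved. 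The paper simply cites this sharp inequality from Duren and Mateljevi\'c--Pavlovi\'c; a correct proposal must cite it in this form or prove it.

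If you want to prove it, the case $p\le2$ is elementary. The estimate $M_p(r,f')\lesssim(1-r)^{-(\frac1q-\frac1p)}M_q(\tfrac{1+r}2,f')$ gives $\int_0^1(1-r)^{p-1}M_p(r,f')^p\,dr\lesssim\int_0^1(1-r)^{p/q'}M_q(r,f')^p\,dr$, and the embedding $\mathcal D(p,p,1)\subset H^p$ finishes. For $p>2$, the case actually used in Proposition~\ref{pro5.1}, this only lands in $\mathcal D(p,p,1)$, which does not embed in $H^p$. A genuine Littlewood--Paley/Flett argument is then unavoidable.

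Note also that your dyadic sketch produces the block weight $2^{+n(\frac1q-\frac1p)}$. That matches the correct space, not $H(q,p,\tfrac1q-\tfrac1p)$, whose dyadic weight is $2^{-n(\frac1q-\frac1p)}$. So the sketch and the stated embedding are inconsistent with each other. Finally, the cruder H\"older summation over blocks you propose cannot work, since there is no slack to spend.
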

	 
	\section{The Hilbert-range multiplier space}\label{sec3}

	 We begin with the range of Hilbert operator on Hardy space.
	 \begin{proposition}\label{pro3.1}
	 Let $1<p<\infty$. 	Then the Hilbert operator $\mathcal H:H^p\to H^p$ is bounded and injective. Consequently,
	 	\[
	 	\|\mathcal H(f)\|_{\mathcal R_p}:=\|f\|_{H^p}
	 	\]
	 	defines a Banach norm on $\mathcal R_p$, and the inclusion $\mathcal R_p\hookrightarrow H^p$ is continuous.
	 \end{proposition}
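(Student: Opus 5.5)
The plan is to treat the three assertions in turn: boundedness, injectivity, and then the norm and embedding statements, which follow formally from the first two.

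\emph{Boundedness.} This is the theorem of Diamantopoulos and Siskakis \cite{dia1}, quoted in the introduction, and we may simply cite it. For a self-contained route I would use the integral representation $\mathcal H(f)(z)=\int_0^1 f(t)\frac{dt}{1-tz}$. This is (\ref{eq4}) with $g(z)=\log\frac1{1-z}$, because then $(n+1)b_{n+1}=1$. The integral converges absolutely, since $|f(t)|\lesssim \|f\|_{H^p}(1-t)^{-1/p}$ and $p>1$. Changing the path of integration to $t\mapsto \frac{s}{(s-1)z+1}$ writes $\mathcal H(f)=\int_0^1 T_s f\,ds$, where $T_s$ is a weighted composition operator. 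The standard estimate $\|T_s f\|_{H^p}\lesssim s^{1/p-1}(1-s)^{-1/p}\|f\|_{H^p}$ then gives
\[
\|\mathcal H(f)\|_{H^p}\lesssim \|f\|_{H^p}\int_0^1 s^{1/p-1}(1-s)^{-1/p}\,ds.
\]
The last integral is a Beta integral, finite for $1<p<\infty$. I expect this verification to be the main technical step, and in the paper I would just cite \cite{dia1}.

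\emph{Injectivity.} Suppose $\mathcal H(f)=0$ for some $f\in H^p$. Then every Taylor coefficient of $\mathcal H(f)$ vanishes, so $\int_0^1 t^n f(t)\,dt=0$ for all $n\ge0$. The function $f$ restricted to $[0,1)$ lies in $L^1(0,1)$, by the growth bound above. A function in $L^1(0,1)$ whose moments all vanish is zero almost everywhere: by Weierstrass, polynomials are uniformly dense in $C[0,1]$, so $\int_0^1 \phi f\,dt=0$ for every continuous $\phi$, and hence $f=0$ a.e. on $[0,1)$. Since $f$ is analytic on $\mathbb D$ and continuous on $[0,1)$, it vanishes on a set with accumulation points in $\mathbb D$, so $f\equiv0$ by the identity theorem.

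\emph{Norm and embedding.} Because $\mathcal H$ is injective, each $h\in\mathcal R_p$ has exactly one preimage $f$, so $\|h\|_{\mathcal R_p}:=\|f\|_{H^p}$ is well defined. Linearity of $\mathcal H$ makes it a norm. The map $\mathcal H:H^p\to\mathcal R_p$ is then a linear isometric bijection, so $\mathcal R_p$ inherits completeness from $H^p$ and is a Banach space. Finally, $\|h\|_{H^p}=\|\mathcal H f\|_{H^p}\le\|\mathcal H\|\,\|f\|_{H^p}=\|\mathcal H\|\,\|h\|_{\mathcal R_p}$, which is exactly continuity of the inclusion $\mathcal R_p\hookrightarrow H^p$.
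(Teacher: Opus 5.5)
Your proposal is correct and follows the paper's proof. Boundedness is cited from \cite{dia1}. Injectivity comes from the vanishing of all moments $\int_0^1 t^n f(t)\,dt$, a density argument, and the identity theorem. The Banach-norm and continuous-inclusion claims follow because $\mathcal H:H^p\to\mathcal R_p$ is an isometric bijection and $\mathcal H$ is bounded on $H^p$.

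The only difference is minor. You put $f|_{[0,1)}$ in $L^1(0,1)$ via the pointwise growth bound and use Weierstrass density in $C[0,1]$. The paper instead puts it in $L^p(0,1)$ via Fej\'er--Riesz and uses density of polynomials in $L^{p'}(0,1)$. Both arguments work.
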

	 
	 \begin{proof}
	 
	 	The boundedness of  $\mathcal H:H^p\to H^p$ follows from Theorem 1.1 in \cite{dia1}.
	 
	 	Let $f\in H^{p}$. Assume that $\mathcal H(f)=0$.  Since the Taylor coefficients of  $\mathcal H(f)=0$ are given by
	 	$$\int_{0}^1 t^{n}f(t)dt,$$
	 we have 
	 	\[
	 	\int_0^1t^nf(t)\,dt=0,
	 	\qquad n=0,1,2,\ldots.
	 	\]
	 This imples that
	 	\[
	 	\int_0^1P(t)f(t)\,dt=0
	 	\]
	 	for every polynomial $P$. By the Fej\'er--Riesz inequality,   $f|_{(0,1)}\in L^p(0,1)$.  Since polynomials are dense in  $L^{p'}(0,1)$, the continuous functional
	 	\[
	 	u\longmapsto\int_0^1u(t)f(t)\,dt
	 	\]
	 	vanishes on all  $L^{p'}(0,1)$. Therefore, $f=0$ almost everywhere on $(0,1)$. Since $f$ is continuous, it follows that $f\equiv 0$ on $(0,1)$. The identity theorem for analytic functions  gives $f\equiv 0$ in  $\dd$. This means that $\h$ is injective on $H^{p}$.
	 	
	 	Thus, the pullback norm is well defined. The definition of $\mathcal R_p$ shows that  $\mathcal H:H^p\to\mathcal R_p$ is an isometric isomorphism, so the  completeness follows. The continuous inclusion $\mathcal R_p\hookrightarrow H^p$ follows from the boundedness of $\mathcal H:H^p\to H^p$.
	 \end{proof}

	To simplify notations, we write
	 \[
	 \mathfrak M_p=(\mathcal R_p,H^p),
	 \qquad
	 \|\varphi\|_{\mathfrak M_p}
	 =\sup_{0\ne F\in\mathcal R_p}
	 \frac{\|\varphi*F\|_{H^p}}{\|F\|_{\mathcal R_p}}.
	 \]
	 
	Let
	\[
	g(z)=\sum_{n=0}^\infty b_nz^n,
	\qquad
	g'(z)=\sum_{k=0}^\infty\gamma_kz^k,
	\qquad
	\gamma_k=(k+1)b_{k+1}.
	\]
	For analytic polynomials
	\[
	f(z)=\sum_{n=0}^Na_nz^n,
	\qquad
	h(z)=\sum_{k=0}^Mc_kz^k,
	\]
set 
	\begin{equation*}\label{eq:B-form}
		\mathcal B_g(f,h)
		=\sum_{n=0}^N\sum_{k=0}^M
		\frac{\gamma_ka_n\overline{c_k}}{n+k+1},
	\end{equation*}
	and define
	\begin{equation*}\label{eq:Bnorm}
		\mathfrak B_p(g)
		=\sup_{f,h\ne0}
		\frac{|\mathcal B_g(f,h)|}
		{\|f\|_{H^p}\|h\|_{H^{p'}}},
	\end{equation*}
	where the supremum is taken over nonzero analytic polynomials.

	 \begin{theorem}\label{th3.2}
	 	Let $1<p<\infty$ and $g\in H(\mathbb D)$. The following  statements are equivalent:
	 
	 	 (i) The operator  $\mathcal H_g:H^p\to H^p$ is bounded;
	 	 
	 	(ii) $g'\in\mathfrak M_p=(\mathcal R_p,H^p)$;
	 	
	 	 (iii) $\mathfrak B_p(g)<\infty$.
	 
	 	Moreover,
	 	\begin{equation*}\label{eq:norm-main}
	 		\|\mathcal H_g\|_{H^p\to H^p}
	 		=\|g'\|_{\mathfrak M_p}
	 		\asymp \mathfrak B_p(g).
	 	\end{equation*}
	 \end{theorem}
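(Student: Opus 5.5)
The plan rests on the identity \eqref{eq4}, $\mathcal H_g(f)=g'*\mathcal H(f)$, valid for every $f\in H^p$ (the integral converges absolutely because $f|_{(0,1)}\in L^p(0,1)$ by Fej\'er--Riesz), together with the fact from Proposition \ref{pro3.1} that $\mathcal H:H^p\to\mathcal R_p$ is an isometric isomorphism.

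\emph{(i)$\Leftrightarrow$(ii) with equality of norms.} Every $F\in\mathcal R_p$ can be written uniquely as $F=\mathcal H(f)$ with $f\in H^p$ and $\|F\|_{\mathcal R_p}=\|f\|_{H^p}$. Then
\[
\frac{\|g'*F\|_{H^p}}{\|F\|_{\mathcal R_p}}=\frac{\|\mathcal H_g(f)\|_{H^p}}{\|f\|_{H^p}}.
\]
Taking suprema gives $\|g'\|_{\mathfrak M_p}=\|\mathcal H_g\|_{H^p\to H^p}$, with either side finite exactly when the other is. I regard this step as purely formal.

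\emph{(i)$\Rightarrow$(iii).} For polynomials $f,h$, expand $\mathcal H_g(f)=\sum_k\gamma_k\big(\sum_n a_n/(n+k+1)\big)z^k$. Then $\mathcal B_g(f,h)$ is the Cauchy pairing
\[
\lim_{r\to1}\frac1{2\pi}\int_0^{2\pi}\mathcal H_g(f)(re^{i\theta})\overline{h(re^{i\theta})}\,d\theta .
\]
Since $h$ is a polynomial, this is a finite sum. H\"older's inequality gives $|\mathcal B_g(f,h)|\le\|\mathcal H_g\|\,\|f\|_{H^p}\|h\|_{H^{p'}}$, so $\mathfrak B_p(g)\le\|\mathcal H_g\|$.

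\emph{(iii)$\Rightarrow$(i).} For a polynomial $f$, the function $\mathcal H_g(f)$ is analytic in $\mathbb D$. Fix $0<r<1$. The key step is to test against the Taylor truncations $h=h_M$ of a general polynomial. Pairing the dilate $\mathcal H_g(f)_r$ with $h$ gives $\sum_k r^k\gamma_k\overline{c_k}\sum_n a_n/(n+k+1)=\mathcal B_{g_r}(f,h)$, where $g_r(z)=g(rz)$ up to the normalization of $\gamma_k$. This equals $\mathcal B_g(f,h_r)$, where $h_r(z)=h(rz)$ has coefficients $r^kc_k$ and satisfies $\|h_r\|_{H^{p'}}\le\|h\|_{H^{p'}}$. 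Hence
\[
\Big|\frac1{2\pi}\int_0^{2\pi}\mathcal H_g(f)(re^{i\theta})\overline{h(e^{i\theta})}\,d\theta\Big|\le\mathfrak B_p(g)\,\|f\|_{H^p}\|h\|_{H^{p'}}
\]
for all polynomials $h$. By the $L^p$–$L^{p'}$ duality for $H^p$ via the Riesz projection, which is bounded for $1<p<\infty$ and where analytic polynomials suffice, this yields $M_p(r,\mathcal H_g f)\le C_p\mathfrak B_p(g)\|f\|_{H^p}$. Letting $r\to1$ bounds $\mathcal H_g$ on polynomials.

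\emph{Main obstacle: extension from polynomials to all of $H^p$.} Given $f\in H^p$, take polynomials $f_j\to f$ in $H^p$. Fej\'er--Riesz gives $f_j\to f$ in $L^1(0,1)$. Hence $\mathcal H_g(f_j)\to\mathcal H_g(f)$ locally uniformly, because $|g'(tz)|$ is bounded for $|z|\le\rho<1$. Fatou's lemma on the circles $|z|=r$ then gives $\|\mathcal H_g f\|_{H^p}\le C_p\mathfrak B_p(g)\|f\|_{H^p}$. This closes the chain, with $\|\mathcal H_g\|\asymp\mathfrak B_p(g)$ and constants depending only on $p$ through the Riesz projection.
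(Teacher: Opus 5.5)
Your proof is correct. It has the same skeleton as the paper's: (i)$\Leftrightarrow$(ii) comes from $\mathcal H_g(f)=g'*\mathcal H(f)$ together with the isometry $\mathcal H:H^p\to\mathcal R_p$, and (i)$\Leftrightarrow$(iii) comes from pairing against analytic polynomials.

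The one genuine difference is in (iii)$\Rightarrow$(i).

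\textbf{The paper's route.} The paper truncates the output. It works with the Taylor partial sums $T_Mf$ of $\mathcal H_g(f)$ and the identity $\langle T_Mf,h\rangle=\mathcal B_g(f,h)$, which holds only for $\deg h\le M$. To reach $\|T_Mf\|_{H^p}\lesssim\mathfrak B_p(g)\|f\|_{H^p}$ by duality, one tacitly needs two things: the identity $\langle T_Mf,h\rangle=\mathcal B_g(f,S_Mh)$ for arbitrary $h$, and the uniform boundedness of the partial-sum operators $S_M$ on $H^{p'}$. A limit in $M$ follows.

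\textbf{Your route.} You dilate the output and move the dilation onto the test function: $\langle(\mathcal H_gf)_r,h\rangle=\mathcal B_g(f,h_r)$ with $\|h_r\|_{H^{p'}}\le\|h\|_{H^{p'}}$. This costs no constant and needs no limiting argument in $M$. Both versions then rely on M.~Riesz's theorem to turn the bilinear bound into an $H^p$ bound.

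\textbf{Extension step.} Here your argument is more complete than the paper's. The paper only says that $\mathcal H_g$ \emph{extends uniquely} from the dense set of polynomials. You actually verify that this extension is the integral operator itself. You use $f_j\to f$ in $L^1(0,1)$ (Fej\'er--Riesz), then locally uniform convergence of $\mathcal H_g(f_j)$ to $\mathcal H_g(f)$, then the uniform bound on each circle $|z|=r$.
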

	 
	 	\begin{proof}
	 		
	 		We first prove the equivalence between (i) and (ii).
	 		
	 		\medskip
	 		
	 		$(ii)\Longrightarrow(i)$.
	 		Assume that
	 	$	g'\in\mathfrak M_p=(\mathcal R_p,H^p).$
	 		For every $f\in H^p$, using the identity
	 		\[	\mathcal H_g(f)=g'*\mathcal H(f),	\]
	 		we obtain
	 		\begin{align*}
	 			\|\mathcal H_g(f)\|_{H^p}
	 			&=\|g'*\mathcal H(f)\|_{H^p}\\
	 			&\leq
	 			\|g'\|_{\mathfrak M_p}
	 			\|\mathcal H(f)\|_{\mathcal R_p}\\
	 			&=
	 			\|g'\|_{\mathfrak M_p}\|f\|_{H^p}.
	 		\end{align*}
	 		Hence $\mathcal H_g$ is bounded and
	 		\[
	 		\|\mathcal H_g\|_{H^p\to H^p}
	 		\leq
	 		\|g'\|_{\mathfrak M_p}.
	 		\]
	 		
	 		\medskip
	 		
	 		$(i)\Longrightarrow(ii)$.
	 		Assume that $\mathcal H_g:H^p\to H^p$ is bounded. Let
	 		$F\in\mathcal R_p$. By the definition of $\mathcal R_p$,
	 		there exists $f\in H^p$ such that
	 		\[
	 		F=\mathcal H(f).
	 		\]
	 		The injectivity of $\mathcal H$ implies that such an $f$ is unique.
	 		Therefore,
	 		\[
	 		g'*F
	 		=
	 		g'*\mathcal H(f)
	 		=
	 		\mathcal H_g(f).
	 		\]
	 		Consequently,
	 		\begin{align*}
	 			\|g'*F\|_{H^p}
	 			&=
	 			\|\mathcal H_g(f)\|_{H^p}\\
	 			&\leq
	 			\|\mathcal H_g\|_{H^p\to H^p}
	 			\|f\|_{H^p}\\
	 			&=
	 			\|\mathcal H_g\|_{H^p\to H^p}
	 			\|F\|_{\mathcal R_p}.
	 		\end{align*}
	 		Hence
	 		\[	g'\in\mathfrak M_p	\]
	 		and
	 		\[	\|g'\|_{\mathfrak M_p}	\leq	\|\mathcal H_g\|_{H^p\to H^p}.	\]
	 		Thus, we obtain
	 		\[	\|\mathcal H_g\|_{H^p\to H^p}	=	\|g'\|_{\mathfrak M_p}.	\]
	 		
	 		It remains to prove the equivalence with (iii).

	 		$(i)\Longrightarrow(iii)$.
	 		Assume that $\mathcal H_g$ is bounded. Let
	 		\[
	 		f(z)=\sum_{n=0}^{N}a_nz^n,
	 		\qquad
	 		h(z)=\sum_{k=0}^{M}c_kz^k
	 		\]
	 		be analytic polynomials. Since
	 		\[
	 		\mathcal H(f)(z)
	 		=
	 		\sum_{k=0}^{\infty}
	 		\left(
	 		\sum_{n=0}^{N}
	 		\frac{a_n}{n+k+1}
	 		\right)z^k,
	 		\]
	 		we have
	 		\[
	 		\mathcal H_g(f)(z)
	 		=
	 		\sum_{k=0}^{\infty}
	 		\gamma_k
	 		\left(
	 		\sum_{n=0}^{N}
	 		\frac{a_n}{n+k+1}
	 		\right)z^k .
	 		\]
	 		Hence, using the Hardy space dual pairing between $H^p$ and
	 		$H^{p'}$ we have
	 		\begin{align*}
	 			\langle\mathcal H_g(f),h\rangle
	 			&=
	 			\sum_{k=0}^{M}
	 			\gamma_k
	 			\left(
	 			\sum_{n=0}^{N}
	 			\frac{a_n}{n+k+1}
	 			\right)
	 			\overline{c_k}\\
	 			&=
	 			\sum_{n=0}^{N}\sum_{k=0}^{M}
	 			\frac{\gamma_k a_n\overline{c_k}}
	 			{n+k+1}\\
	 			&=
	 			\mathcal B_g(f,h).
	 		\end{align*}
	 		By the duality of Hardy spaces, we have
	 		\begin{align*}
	 			|\mathcal B_g(f,h)|
	 			&=
	 			|\langle\mathcal H_g(f),h\rangle|\\
	 			&\leq
	 			C_p
	 			\|\mathcal H_g(f)\|_{H^p}
	 			\|h\|_{H^{p'}}\\
	 			&\leq
	 			C_p
	 			\|\mathcal H_g\|
	 			\|f\|_{H^p}
	 			\|h\|_{H^{p'}} .
	 		\end{align*}
	 		Taking the supremum over all nonzero analytic polynomials $f,h$, we obtain 
	 		\[	\mathfrak B_p(g)	\lesssim	\|\mathcal H_g\|_{H^p\to H^p}.\]

	 		$(iii)\Longrightarrow(i)$.
	 		Assume that	$	\mathfrak B_p(g)<\infty.$
	 		Let
	 		\[
	 		f(z)=\sum_{n=0}^{N}a_nz^n
	 		\]
	 		be an analytic polynomial. For $M\ge0$, define
	 		\[
	 		T_Mf(z)
	 		=
	 		\sum_{k=0}^{M}
	 		\gamma_k
	 		\left(
	 		\sum_{n=0}^{N}
	 		\frac{a_n}{n+k+1}
	 		\right)z^k .
	 		\]
	 		Then $T_Mf$ is the $M$-th Taylor partial sum of $\mathcal H_g(f)$.
	 		
	 		For every analytic polynomial
	 		\[
	 		h(z)=\sum_{k=0}^{M}c_kz^k ,
	 		\]
	 		we have
	 		\[
	 		\langle T_Mf,h\rangle
	 		=	\mathcal B_g(f,h).
	 		\]
	 		Hence,
	 		\[
	 		|\langle T_Mf,h\rangle|
	 		\leq
	 		\mathfrak B_p(g)
	 		\|f\|_{H^p}
	 		\|h\|_{H^{p'}}.
	 		\]
	 		By the duality of Hardy space,
	 		\[
	 		\|T_Mf\|_{H^p}
	 		\lesssim
	 		\mathfrak B_p(g)
	 		\|f\|_{H^p},
	 		\]
	 		where the constant is independent of $M$.
	 		
	 		Now fix $0<r<1$. Since $T_Mf$ are the Taylor partial sums of
	 		$\mathcal H_g(f)$, we have
	 		\[
	 		T_Mf(r\cdot)\longrightarrow
	 		\mathcal H_g(f)(r\cdot)
	 		\]
	 		uniformly on $\overline{\mathbb D}$.
	 		Therefore, by Fatou's lemma,
	 		\begin{align*}
	 			M_p(r,\mathcal H_g(f))
	 			&\leq
	 			\liminf_{M\to\infty}
	 			\|T_Mf(r\cdot)\|_{H^p}\\
	 			&\leq
	 			\liminf_{M\to\infty}
	 			\|T_Mf\|_{H^p}\\
	 			&\lesssim
	 			\mathfrak B_p(g)
	 			\|f\|_{H^p}.
	 		\end{align*}
	 		The constant is independent of $r$. Taking the supremum over
	 		$0<r<1$ gives
	 		\[\|\mathcal H_g(f)\|_{H^p}	\lesssim	\mathfrak B_p(g)	\|f\|_{H^p}.	\]
	 		Thus $\mathcal H_g$ is bounded on analytic polynomials. Since analytic polynomials are dense in $H^p$, $\mathcal H_g$ extends uniquely to a	bounded operator on $H^p$. Hence
	 		\[	\|\mathcal H_g\|_{H^p\to H^p}	\lesssim	\mathfrak B_p(g).\]
	 		
	 		Combining this estimate with the previous inequality yields
	 		$$
	 		\|\mathcal H_g\|_{H^p\to H^p}	\asymp	\mathfrak B_p(g).	$$
	 		This completes the proof.
	 	\end{proof}

	\begin{theorem}\label{th3.3}
		For $1<p<\infty$,
		\begin{equation*}\label{eq:universal-inclusion}
			\mathfrak M_p\subset H\left(p,\infty,\frac1{p'}\right).
		\end{equation*}
		More precisely,
		\[
		\|\varphi\|_{H(p,\infty,1/p')}
		\lesssim \|\varphi\|_{\mathfrak M_p}.
		\]
	\end{theorem}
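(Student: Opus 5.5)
We want $\sup_n 2^{-n/p'}\|\Delta_n\varphi\|_{H^p}\lesssim\|\varphi\|_{\mathfrak M_p}$. By the dyadic characterization (\ref{eq7}), this gives the stated inclusion. The plan is to test the multiplier against a family of elements of $\mathcal R_p$ whose images under $\mathcal H$ have Taylor coefficients of size roughly $2^{-n/p'}$ on the block $[2^n,2^{n+1})$. These coefficients also need to be smooth enough that Lemma \ref{lem2.2} lets us divide them out.

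The natural test functions are $f_a(z)=(1-a)^{1/p}(1-az)^{-2/p}$ with $a\in(0,1)$. More simply one can use $f_a=(1-az)^{-1}$, suitably normalized: its $H^p$ norm is $\asymp(1-a)^{-1/p'}$ for $p>1$. For $f(z)=(1-az)^{-1}$ we get
\[
\mathcal H(f)(z)=\sum_k\Big(\int_0^1\frac{t^k}{1-at}\,dt\Big)z^k .
\]
Take $a=1-2^{-n}$. The coefficient $c_k=\int_0^1 t^k(1-at)^{-1}dt$ is then $\asymp \log 2$-type, i.e. $\asymp 1$ for $k\asymp 2^n$: the mass sits near $t\approx 1-1/k$, where $1-at\approx 2^{-n}+1/k$, so $c_k\asymp \int_0^1 t^k/(1-at)\,dt\gtrsim \tfrac1k\cdot k\cdot$const. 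Hence, with $F_n=\mathcal H(f_n)$ and $\|F_n\|_{\mathcal R_p}=\|f_n\|_{H^p}\asymp 2^{n/p'}$, we expect
\[
\|\varphi*F_n\|_{H^p}\le\|\varphi\|_{\mathfrak M_p}\,2^{n/p'}.
\]

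The next step is to extract $\Delta_n\varphi$ from $\varphi*F_n$. Since $H^p$ norms of dyadic blocks are controlled by the whole function for $1<p<\infty$ (Riesz projection / Lemma \ref{lem2.2} with $\lambda$ the indicator of the block), we have $\|\Delta_n(\varphi*F_n)\|_{H^p}\lesssim\|\varphi*F_n\|_{H^p}$. Then $\Delta_n\varphi=\sum_{k\in I_n}\lambda_k\,\varphi_k c_k z^k$ with $\lambda_k=1/c_k$ for $k\in I_n=[2^n,2^{n+1})$ and $\lambda_k=0$ otherwise. We apply Lemma \ref{lem2.2} to the function $\Delta_n(\varphi*F_n)$ with this $\lambda$. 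The boundedness hypothesis needs $c_k\gtrsim1$ on $I_n$, uniformly in $n$. The variation hypothesis needs $\sum_{k\in I_n}|c_{k+1}-c_k|/(c_kc_{k+1})\lesssim1$. Because $c_k$ is decreasing in $k$ ($t^k$ decreases), this telescopes to $\lesssim (c_{2^n}-c_{2^{n+1}})/c_{2^{n+1}}^2\lesssim 1$, and the jumps at the block endpoints contribute at most $2\sup|\lambda|$. So $\|\Delta_n\varphi\|_{H^p}\lesssim\|\varphi*F_n\|_{H^p}\lesssim \|\varphi\|_{\mathfrak M_p}2^{n/p'}$, which is the claim. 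The term $|\varphi(0)|$ is handled the same way with $n=0$.

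The main obstacle is the uniform lower bound $c_k\gtrsim1$ for $k\in I_n$ when $a=1-2^{-n}$. We have $c_k\ge\int_{1-1/k}^1 t^k(1-at)^{-1}dt$, and on this interval $t^k\ge e^{-2}$ roughly and $1-at\le 2^{-n}+1/k\le 2/k\cdot$const. This gives $c_k\gtrsim \frac1k\cdot k\cdot$const$\,\asymp1$. The remaining checks are routine: $c_k\le\int_0^1(1-at)^{-1}dt\asymp n$ is irrelevant since only $1/c_k$ is needed, and $\|(1-az)^{-1}\|_{H^p}\asymp(1-a)^{-1/p'}$ is standard.
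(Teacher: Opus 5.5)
Your argument is correct and is essentially the paper's proof. The paper tests against $F_{2^n}=\mathcal H(K_{2^n})$, where $K_N=\sum_{j<N}z^j$ is the analytic Dirichlet kernel, rather than against your $(1-az)^{-1}$ with $a=1-2^{-n}$; it then inverts the decreasing block coefficients $\sum_{j<N}\frac{1}{k+j+1}\in[\frac13,1]$ via Lemma~\ref{lem2.2} exactly as you do, and uses $F=\mathcal H(1)$ for the constant term, which is your $n=0$ case.

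One small correction. The variation of $1/c_k$ over $I_n$ telescopes exactly to $1/c_{2^{n+1}-1}-1/c_{2^n}\le\sup_{I_n}1/c_k\lesssim1$, and this uses only your lower bound on $c_k$. Your cruder bound $(c_{2^n}-c_{2^{n+1}})/c_{2^{n+1}}^2$ instead needs $c_{2^n}\lesssim1$. That is true, but it does not follow from the $\asymp n$ upper bound you mention.
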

	
	\begin{proof}
		Write
		\[	\varphi(z)=\sum_{k=0}^{\infty}c_kz^k.	\]
		
		For $N\ge2$, let
		\[
		K_N(z)=\sum_{j=0}^{N-1}z^j,
		\qquad F_N=\mathcal H(K_N).
		\]
		The analytic Dirichlet-kernel estimate gives
		\[	\|K_N\|_{H^p}\asymp N^{1/p'}.	\]
	 By the definition of the pullback norm,
		\[
		\|F_N\|_{\mathcal R_p}=\|K_N\|_{H^p}\asymp N^{1/p'}.
		\]
		The $k$-th coefficient of $F_N$ is
		\[	\lambda_{N,k}	=\int_0^1t^kK_N(t)\,dt	=\sum_{j=0}^{N-1}\frac1{k+j+1}.	\]
		
		If $N\le k<2N$, then every denominator $k+j+1$ lies between $N+1$ and $3N$, hence
		\[
		\frac13\le\lambda_{N,k}\le1.
		\]
		Moreover, $k\mapsto\lambda_{N,k}$ is decreasing. Thus
		\[
		\mu_{N,k}=\lambda_{N,k}^{-1},
		\qquad N\le k<2N,
		\]
		is increasing,  satisfies $1\le\mu_{N,k}\le3$, and has total variation at most $2$. Applying Lemma \ref{lem2.2}  to the polynomial
		\[
		P_{2^{n}}(z)=\sum_{k=2^{n}}^{2^{n+1}-1}\lambda_{2^{n},k}c_kz^k
		\]
	we obtain
		\[	\left\|\sum_{k=2^{n}}^{2^{n+1}-1}c_kz^k\right\|_{H^p} \lesssim\|P_{2^{n}}\|_{H^p}.	\]
		The polynomial $P_{2^{n}}$ is the block projection of $\varphi*F_{2^{n}}$. Since block projections are uniformly bounded on $H^p$,
		\begin{align*}
			\left\|\sum_{k=2^{n}}^{2^{n+1}-1}c_kz^k\right\|_{H^p}
			&\lesssim \|\varphi*F_{2^{n}}\|_{H^p}\\
			&\le \|\varphi\|_{\mathfrak M_p}\|F_{2^{n}}\|_{\mathcal R_p}
			\lesssim 2^{n/p'} \|\varphi\|_{\mathfrak M_p}.
		\end{align*}
		
	Thus,
	\[	\|\Delta_n\varphi\|_{H^p}\lesssim 2^{n/p'}\|\varphi\|_{\mathfrak M_p},
	\qquad n\ge0.	\]
		
		In addition, the constant coefficient is also controlled. Indeed, take $F=\mathcal H(1)$; then $\widehat F(0)=1$ and $\|F\|_{\mathcal R_p}=1$, so
		\[	|c_0|\le\|\varphi*F\|_{H^p}\le \|\varphi\|_{\mathfrak M_p}.	\]
	
		The dyadic characterization \eqref{eq7} shows that
		\[	\|\varphi\|_{H(p,\infty,1/p')}	\lesssim \|\varphi\|_{\mathfrak M_p}. 	\]
	\end{proof}
	 
\begin{theorem}\label{th3.4}
	If $1<p\le2$, then
	\begin{equation*}\label{eq:equality-small-p}
		\mathfrak M_p
		=H\left(p,\infty,\frac1{p'}\right)
	\end{equation*}
	with equivalent norms. Consequently,
	\[
	\mathcal H_g:H^p\to H^p\text{ is bounded}
	\quad\Longleftrightarrow\quad
	g\in\Lambda\left(p,\frac1p\right).
	\]
\end{theorem}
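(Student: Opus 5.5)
The inclusion $\mathfrak M_p\subset H(p,\infty,1/p')$ with $\|\varphi\|_{H(p,\infty,1/p')}\lesssim\|\varphi\|_{\mathfrak M_p}$ is already Theorem \ref{th3.3}, valid for all $1<p<\infty$. So the plan is to prove the reverse inclusion for $1<p\le2$. By Theorem \ref{th3.2} it suffices to show the following: if $g'=\varphi\in H(p,\infty,1/p')$, then
\[
\|\mathcal H_g(f)\|_{H^p}\lesssim \|g'\|_{H(p,\infty,1/p')}\|f\|_{H^p}\qquad (f\in H^p).
\]
This gives $\|\varphi\|_{\mathfrak M_p}=\|\mathcal H_g\|\lesssim\|\varphi\|_{H(p,\infty,1/p')}$. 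We may take $f$ a polynomial and argue by density. I will work with the integral form \eqref{eq3} rather than coefficients. The restriction $p\le2$ enters only through the embedding $\mathcal D(p,p,1)\subset H^p$ in \eqref{eq1}, which gives
\[
\|F\|_{H^p}^p\lesssim |F(0)|^p+\int_0^1(1-r)^{p-1}M_p(r,F')^p\,dr .
\]

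Apply this to $F=\mathcal H_g(f)$. The constant term is harmless: $|F(0)|=|g'(0)|\,|\int_0^1 f(t)\,dt|\lesssim\|g'\|_{H(p,\infty,1/p')}\|f\|_{H^p}$ by Fejér--Riesz. Differentiating under the integral gives
\[
F'(z)=\int_0^1 f(t)\,t\,g''(tz)\,dt.
\]
By Minkowski's integral inequality and Lemma \ref{lem2.3} applied to $h=g'\in H(p,\infty,1/p')$,
\[
M_p(r,F')\le\int_0^1|f(t)|\,M_p(tr,g'')\,dt\lesssim \|g'\|_{H(p,\infty,1/p')}\int_0^1\frac{|f(t)|}{(1-tr)^{1+1/p'}}\,dt .
\]

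The main step is to control the resulting weighted integral:
\[
\int_0^1(1-r)^{p-1}\left(\int_0^1\frac{|f(t)|}{(1-tr)^{1+1/p'}}dt\right)^p dr\lesssim\int_0^1|f(t)|^p\,dt\lesssim\|f\|_{H^p}^p,
\]
where the last inequality is Fejér--Riesz. Substitute $x=1-r$, $y=1-t$ and use $1-tr\asymp x+y$ on $[0,1]^2$. Then with $u(y)=|f(1-y)|$ and $a=1/p'\ge0$, the weight is $x^{p-1}=x^{ap}$, since $ap=p-1$. The inner kernel is $(x+y)^{-(1+a)}$. This is exactly Lemma \ref{lem2.1}, which closes the estimate.

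For the consequence, Theorem \ref{th3.2} gives that boundedness of $\mathcal H_g$ is equivalent to $g'\in\mathfrak M_p=H(p,\infty,1/p')$. By the characterization recalled in the introduction, this is equivalent to $g\in\Lambda(p,1/p)$. Note that $M_p(r,g')=O((1-r)^{-1/p'})$ with $1/p'<1$ forces $g\in H^p$ after integrating radially, so no extra hypothesis is needed. The only delicate point I foresee is checking that the exponents match Lemma \ref{lem2.1} with $a\ge0$ and that Lemma \ref{lem2.3} applies at radius $tr$ uniformly; both look routine.
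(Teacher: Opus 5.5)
Your proposal is correct and follows essentially the same route as the paper. You take one inclusion from Theorem \ref{th3.3}; for the reverse you bound $M_p(r,U')$ for $U=g'*\mathcal H(f)=\mathcal H_g(f)$ via Minkowski and Lemma \ref{lem2.3}, control the weighted integral by Lemma \ref{lem2.1} with $a=1/p'$, and finish with the embedding $\mathcal D(p,p,1)\subset H^p$ for $p\le 2$. Phrasing this through $\mathcal H_g$ rather than the paper's $U=\varphi*F$ is only cosmetic, and your observation that $M_p(r,g')=O((1-r)^{-1/p'})$ already forces $g\in H^p$ correctly closes the $\Lambda(p,1/p)$ equivalence.
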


\begin{proof}
	The inclusion $\mathfrak M_p\subset H(p,\infty,1/p')$ was proved in  Theorem \ref{th3.3}. Next, we prove the reverse inclusion.
	
	Let
	\[
	A=\|\varphi\|_{H(p,\infty,1/p')},
	\qquad F=\mathcal H(f)\in\mathcal R_p,
	\qquad U=\varphi*F.
	\]
	We must show $\|U\|_{H^p}\lesssim A\|f\|_{H^p}$.
	
	If $\varphi(z)=\sum_{n\ge0}c_nz^n$, then
	\begin{align*}
		U(z)
		&=\sum_{n=0}^{\infty}c_n
		\left(\int_0^1t^nf(t)\,dt\right)z^n\\
		&=\int_0^1f(t)\sum_{n=0}^{\infty}c_n(tz)^n\,dt
		=\int_0^1f(t)\varphi(tz)\,dt.
	\end{align*}
	As before, the interchange is justified on compact subsets by uniform convergence and the Fej\'er--Riesz inequality.
	
	By Lemma \ref{lem2.3} we have
	\[
	M_p(r,\varphi')
	\lesssim A(1-r)^{-1-1/p'}.
	\]
	Differentiating under the integral  and using Minkowski's inequality we have
	\begin{align*}
		M_p(r,U')
		&\le\int_0^1t|f(t)|M_p(rt,\varphi')\,dt\\
		&\lesssim A\int_0^1
		\frac{|f(t)|}{(1-rt)^{1+1/p'}}\,dt.
	\end{align*}
	
	Set $x=1-r$, $y=1-t$, and $u(y)=|f(1-y)|$. Since
	\[	1-rt=x+y-xy,	\]
	and $\tfrac12(x+y)\le x+y-xy\le x+y$, we obtain
	\[	(1-r)^{1/p'}M_p(r,U')
	\lesssim A\,x^{1/p'}
	\int_0^1\frac{u(y)}{(x+y)^{1+1/p'}}\,dy.
	\]
	Applying Lemma  \ref{lem2.1} with $a=1/p'$ we obtain
	\begin{align*}
		\int_0^1(1-r)^{p-1}M_p(r,U')^p\,dr
		&\lesssim A^p\int_0^1|f(t)|^p\,dt\\
		&\lesssim A^p\|f\|_{H^p}^p.
	\end{align*}
	Moreover,
	\[
	U(0)=\varphi(0)\int_0^1f(t)\,dt,
	\]
	by H\"older inequality and Fej\'er--Riesz inequality  we have
	\[	|U(0)|\lesssim A\|f\|_{H^p}.	\]
	This implies that
	\[	\|U\|_{\mathcal D(p,p,1)}	\lesssim A\|f\|_{H^p}.	\]
	Since $2<p\le 2$, the embedding (\ref{eq1}) shows that 
	\[	\|U\|_{H^p}\lesssim \|U\|_{\mathcal D(p,p,1)}.	\]
	Consequently,
	\[	\|\varphi*F\|_{H^p}	\lesssim A\|f\|_{H^p}	=A\|F\|_{\mathcal R_p}.	\]
	Thus $\varphi\in\mathfrak M_p$, and the two inclusions give the asserted equality with equivalent norms.
\end{proof}

	\begin{theorem}\label{thm3.5}
		If $2<p<\infty$, then
		\begin{equation*}\label{eq:strict-big-p}
			\mathfrak M_p
			\subsetneq H\left(p,\infty,\frac1{p'}\right).
		\end{equation*}
	\end{theorem}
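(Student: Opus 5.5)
The inclusion $\mathfrak M_p\subset H(p,\infty,1/p')$ is Theorem \ref{th3.3}, so only strictness remains. I would use the lacunary function $\Phi_p(z)=\sum_{j\ge2}2^{j/p'}z^{2^j}$ from the introduction. The plan is to show that $\Phi_p\in H(p,\infty,1/p')$ and then to produce one $f_p\in H^p$ with $\Phi_p*\mathcal H(f_p)\notin H^p$.

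\emph{Membership.} For $j\ge2$ the block $\Delta_j\Phi_p$ is the single monomial $2^{j/p'}z^{2^j}$, and the other blocks vanish. Hence $\|\Delta_j\Phi_p\|_{H^p}=2^{j/p'}$. The quantity $2^{-j/p'}\|\Delta_j\Phi_p\|_{H^p}$ is therefore bounded, and \eqref{eq7} gives $\Phi_p\in H(p,\infty,1/p')$.

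\emph{Non-membership in $\mathfrak M_p$.} Write $\lambda_n(f)=\int_0^1t^nf(t)\,dt$. Then
\[
\Phi_p*\mathcal H(f)=\sum_{j\ge2}2^{j/p'}\lambda_{2^j}(f)\,z^{2^j}
\]
is a Hadamard lacunary series. By Paley's (Zygmund's) theorem it lies in $H^p$ if and only if $\sum_j 2^{2j/p'}|\lambda_{2^j}(f)|^2<\infty$. I would take
\[
f_p(z)=(1-z)^{-1/p}\Big(\log\frac{e}{1-z}\Big)^{-\beta},\qquad \frac1p<\beta\le\frac12 .
\]
Such a $\beta$ exists exactly because $p>2$.

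\emph{Checks on $f_p$.}
\begin{itemize}
\item The standard computation gives $f_p\in H^p$ when $\beta p>1$: $|f_p(e^{i\theta})|^p\asymp|\theta|^{-1}\log^{-\beta p}(e/|\theta|)$, which is integrable.
\item On $(0,1)$ the function $f_p$ is positive, so each $\lambda_n(f_p)$ is a positive integral.
\item Restrict the integral defining $\lambda_{2^j}(f_p)$ to $t\in[1-2^{-j},1-2^{-j-1}]$. There $t^{2^j}\ge c>0$ and $f_p(t)\asymp 2^{j/p}j^{-\beta}$. This gives $\lambda_{2^j}(f_p)\gtrsim 2^{-j}2^{j/p}j^{-\beta}$.
\item Consequently $2^{j/p'}\lambda_{2^j}(f_p)\gtrsim j^{-\beta}$, so $\sum_j j^{-2\beta}=\infty$ since $2\beta\le1$.
\end{itemize}
It follows that $\Phi_p*\mathcal H(f_p)\notin H^p$, hence $\Phi_p\notin\mathfrak M_p$.

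\emph{Consequence for $\mathcal H_g$.} Take $g_p(z)=\int_0^z\Phi_p$. Then $g_p\in\Lambda(p,1/p)$, while Theorem \ref{th3.2} shows that $\mathcal H_{g_p}$ is unbounded on $H^p$.

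The main obstacle is the choice of $f_p$. The logarithmic exponent has to sit in the window $(1/p,1/2]$: it must be large enough for $f_p\in H^p$ yet small enough for the lacunary coefficients to fail square summability. One also has to apply the lacunary characterization of $H^p$ correctly, in both directions, for $p>2$.
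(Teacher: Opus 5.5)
Your proof is correct and follows the paper's argument essentially verbatim. It uses the same lacunary witness $\Phi_p$ and a test function $(1-z)^{-1/p}(\log\frac{e}{1-z})^{-\beta}$, where the paper fixes $\beta=1/2$, a point of your window $(1/p,1/2]$. It bounds the moments $\int_0^1 t^{2^j}f_p(t)\,dt$ from below on $[1-2^{-j},1-2^{-j-1}]$ in the same way and concludes with Paley's theorem; the direction you actually need is the easy one, since for $p>2$ the inclusion $H^p\subset H^2$ and Parseval already force $\sum_j|d_j|^2<\infty$.
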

	
	\begin{proof}
	 It remains to exhibit a function in the growth space $H\left(p,\infty,\frac1{p'}\right)$ which fails to be a multiplier.
		Let
		\begin{equation*}\label{eq:Phi-p}
			\Phi_p(z)=\sum_{j=2}^{\infty}2^{j/p'}z^{2^j}.
		\end{equation*}
		Each dyadic block contains exactly one nonzero term, and therefore
		\[	\|\Delta_j\Phi_p\|_{H^p}=2^{j/p'}.	\]
		The dyadic characterization \eqref{eq7} gives
		\[	\Phi_p\in H\left(p,\infty,\frac1{p'}\right).	\]
	
		Let
		\[	L(z)=\log\frac{e}{1-z}	\]
		with the principal branch and set
		\begin{equation*}\label{eq:f-p}
			f_p(z)=\frac1{(1-z)^{1/p}L(z)^{1/2}}.
		\end{equation*}
		Then  $f_{p}\in \hd$. For $0<r<1$,
		\begin{align*}
	M_{p}^{p}(r,f_{p})=	\int_{-\pi}^{\pi} \frac{1}{|1-re^{i\theta}|} \left|\log\frac{e}{1-re^{i\theta}}\right |^{-\frac{p}{2}}d\theta.
		\end{align*}
		Since $\frac{p}{2}>1$,  it follows from Lemma 1 in \cite{ave} that 
	$$	\int_{-\pi}^{\pi} \frac{1}{|1-re^{i\theta}|} \left|\log\frac{e}{1-re^{i\theta}}\right |^{-\frac{p}{2}}d\theta \lesssim 1.$$
		Thus, $f_{p}\in H^{p}$.
	For $N\ge2$, let
		\[	I_N=\left[1-\frac1N,1-\frac1{2N}\right].	\]
		If $t\in I_N$, then $t^N\ge\left(1-\frac1N\right)^N\gtrsim 1$, 
		$	(1-t)^{-1/p}\asymp  N^{1/p},$
		and
		\[
		\left(\log\frac{e}{1-t}\right)^{-1/2}\asymp (\log N)^{-1/2}.
		\]
		Since $|I_N|=1/(2N)$, we obtain
		\begin{align}
			\int_0^1t^Nf_p(t)\,dt
			&\ge\int_{I_N}t^Nf_p(t)\,dt\notag\\
			&\gtrsim N^{-1}N^{1/p}(\log N)^{-1/2}\notag\\
			&=N^{-1/p'}(\log N)^{-1/2}.
			\label{eq:moment-lower}
		\end{align}
		Let $F_p=\mathcal H(f_p)\in\mathcal R_p$. The $n$-th coefficient  of $F_{p}$ is the moment $m_{n}(f_{p}):=\int^{1}_{0}t^{n}f_{p}(t)dt$. Therefore,
		\[
		\Phi_p*F_p(z)=\sum_{j=2}^{\infty}d_jz^{2^j},
		\qquad
		d_j=2^{j/p'}m_{2^j}(f_p).
		\]
		Applying \eqref{eq:moment-lower} with $N=2^j$ we obtain
		\[	d_j\gtrsim j^{-1/2}.	\]
		This implies that 
		\[\sum_{j=2}^{\infty}|d_j|^2=\infty.	\]
		By Paley's theorem, the lacunary series $\sum d_jz^{2^j}$ does not belong to $H^p$. Hence
		\[
		\Phi_p*F_p\notin H^p,
		\]
		although $F_p\in\mathcal R_p$. Therefore $\Phi_p\notin\mathfrak M_p$, and the inclusion is strict.
	\end{proof}
	
	The preceding theorem gives an explicit negative answer to the    conjecture posed in \cite{ann}.
	
	\begin{corollary}\label{cor:counterexample-symbol}
		Let $2<p<\infty$ and define
		\[
		g_p(z)=\sum_{j=2}^\infty
		\frac{2^{j/p'}}{2^j+1}z^{2^j+1}.
		\]
		Then
		\[
		g_p\in\Lambda\left(p,\frac1p\right),
		\]
		but $\mathcal H_{g_p}:H^p\to H^p$ is not bounded.
	\end{corollary}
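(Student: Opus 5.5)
The plan is to compute $g_p'$ explicitly and then read off both claims from Theorem \ref{thm3.5} and Theorem \ref{th3.2}. Differentiating term by term gives
\[
g_p'(z)=\sum_{j=2}^\infty \frac{2^{j/p'}}{2^j+1}(2^j+1)z^{2^j}=\sum_{j=2}^\infty 2^{j/p'}z^{2^j}=\Phi_p(z).
\]
So the symbol $g_p$ is exactly a primitive of the lacunary function $\Phi_p$ used in the proof of Theorem \ref{thm3.5}.

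\textbf{Membership in the mean Lipschitz space.} In the proof of Theorem \ref{thm3.5} we showed, via the dyadic characterization \eqref{eq7}, that $\Phi_p\in H(p,\infty,1/p')$. By the equivalence recalled in the introduction,
\[
g\in\Lambda(p,1/p)\iff g'\in H(p,\infty,1/p').
\]
This gives $g_p\in\Lambda(p,1/p)$, provided we also check $g_p\in H^p$.

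For that check, note that $g_p$ is itself a lacunary series with coefficients $2^{j/p'}/(2^j+1)\asymp 2^{-j/p}$. These coefficients are square summable, so Paley's theorem gives $g_p\in H^p$. Alternatively, $g_p$ is bounded on the disc, since $\sum_j 2^{-j/p}<\infty$.

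\textbf{Unboundedness of the operator.} By Theorem \ref{th3.2}, $\mathcal H_{g_p}$ is bounded on $H^p$ if and only if $g_p'\in\mathfrak M_p$. But $g_p'=\Phi_p$, and the proof of Theorem \ref{thm3.5} produced $f_p\in H^p$ with $\Phi_p*\mathcal H(f_p)\notin H^p$, so $\Phi_p\notin\mathfrak M_p$.

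One can also argue directly. The identity \eqref{eq4} gives $\mathcal H_{g_p}(f_p)=\Phi_p*\mathcal H(f_p)$. Its Taylor coefficients form the lacunary sequence $d_j\gtrsim j^{-1/2}$, which is not square summable, so $\mathcal H_{g_p}(f_p)\notin H^p$. For this, $\mathcal H_{g_p}(f_p)$ must be well defined, which holds because $\int_0^1|f_p(t)|\,dt<\infty$ by the Fej\'er--Riesz inequality.

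The only mild obstacle is the $H^p$-membership of $g_p$ needed in the definition of $\Lambda(p,1/p)$, and it is handled by the absolute summability of the coefficients noted above.
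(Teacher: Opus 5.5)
Your proposal is correct and follows the paper's intended argument: $g_p'=\Phi_p$, so the proof of Theorem \ref{thm3.5} gives $g_p'\in H(p,\infty,1/p')\setminus\mathfrak M_p$, and the characterization of $\Lambda(p,1/p)$ together with Theorem \ref{th3.2} yields both claims. Your separate check that $g_p\in H^p$ is harmless but not needed: $M_p(r,g')=O((1-r)^{-1/p'})$ already forces $g\in H^p$ by integrating along radii.
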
 
	 
	\begin{remark}
Theorem \ref{th3.3}-Theorem \ref{thm3.5} provide an alternative proof of Theorems 1 and 2 in \cite{ann} in terms of the multiplier space $\mathfrak M_p$.
	\end{remark}
	
		\section{Two explicit subspaces of the  multiplier space $\mathfrak M_p$}\label{sec4}

\ \ \ \ \ In this section, we aim to provide concrete sufficient conditions for a function to belong to $\mathfrak M_p$ and to identify several natural subclasses of this space. In particular, we introduce two explicit classes, $\mathcal A_p$ and $\mathcal E_p$, which correspond to different mechanisms responsible for the boundedness of the associated generalized Hilbert operators.

	\begin{proposition}\label{pro5.1}
		Let $1<q<p<\infty$. Then
		\begin{equation*}\label{eq:subcritical-Mp-embedding}
			H\left(q,\infty,\frac1{q'}\right)\hookrightarrow\mathfrak M_p.
		\end{equation*}
		More precisely,
		\[	\|\varphi\|_{\mathfrak M_p}
		\lesssim	\|\varphi\|_{H(q,\infty,1/q')}.	\]
	\end{proposition}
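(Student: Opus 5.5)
The plan is to repeat the argument of Theorem \ref{th3.4}, replacing the final embedding (\ref{eq1}) with Lemma \ref{lem2.4}. This lemma is valid for every $1<q<p$, so no restriction on $p$ relative to $2$ is needed.

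Let $\varphi\in H(q,\infty,1/q')$ with $A=\|\varphi\|_{H(q,\infty,1/q')}$, let $f\in H^p$, and set $U=\varphi*\mathcal H(f)$. As in the proof of Theorem \ref{th3.4}, we have $U(z)=\int_0^1 f(t)\varphi(tz)\,dt$, and $|U(0)|\lesssim A\|f\|_{H^p}$ by H\"older and Fej\'er--Riesz. Lemma \ref{lem2.3}, applied with exponent $q$, gives
\[
M_q(r,\varphi')\lesssim A(1-r)^{-1-1/q'} .
\]
Differentiating under the integral and using Minkowski's inequality in $L^q$ then yields
\[
M_q(r,U')\lesssim A\int_0^1\frac{|f(t)|}{(1-rt)^{1+1/q'}}\,dt .
\]

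Next I apply Lemma \ref{lem2.4}:
\[
\|U\|_{H^p}^p\lesssim |U(0)|^p+\int_0^1(1-r)^{p/q'}M_q(r,U')^p\,dr .
\]
With $x=1-r$, $y=1-t$, $u(y)=|f(1-y)|$ and $1-rt\asymp x+y$, the integrand is bounded by
\[
A^p\,x^{p/q'}\left(\int_0^1\frac{u(y)}{(x+y)^{1+1/q'}}\,dy\right)^p .
\]
This is exactly the left side of Lemma \ref{lem2.1} with $a=1/q'\ge0$ and exponent $p$. Hence the integral is $\lesssim A^p\int_0^1|f(t)|^p\,dt\lesssim A^p\|f\|_{H^p}^p$, again by Fej\'er--Riesz. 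Therefore
\[
\|\varphi*\mathcal H(f)\|_{H^p}\lesssim A\|f\|_{H^p}=A\|\mathcal H(f)\|_{\mathcal R_p},
\]
which is the claimed embedding.

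The main point to check is the weight matching: the weight $(1-r)^{p/q'}$ in Lemma \ref{lem2.4} is exactly $x^{ap}$ with $a=1/q'$, so the mixed exponents $q$ (in $M_q$) and $p$ (the outer power) fit Lemma \ref{lem2.1} without loss. A second check is the justification of differentiating under the integral, which requires $\int_0^1|f|<\infty$; this holds by Fej\'er--Riesz. Everything else is a direct transcription of Theorem \ref{th3.4}.
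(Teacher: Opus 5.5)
Your proposal is correct and is essentially the paper's own proof. Both use the moment representation $U(z)=\int_0^1 f(t)\varphi(tz)\,dt$, Lemma~\ref{lem2.3} in $L^q$ with Minkowski, Lemma~\ref{lem2.1} with $a=1/q'$ (the weight $(1-r)^{p/q'}$ matching $x^{ap}$, as you note), the bound on $|U(0)|$, and finally Lemma~\ref{lem2.4} in place of the embedding \eqref{eq1}.
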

	
	\begin{proof}
		Let $\varphi\in H(q,\infty,1/q')$ and let
		$	A=\|\varphi\|_{H(q,\infty,1/q')}.$
		For $F=\mathcal H(f)\in\mathcal R_p$, set $U=\varphi*F$. The moment representation gives
		\[
		U(z)=\int_0^1f(t)\varphi(tz)\,dt.
		\]
		By Lemma \ref{lem2.3},
		\[
		M_q(\rho,\varphi')
		\lesssim A(1-\rho)^{-1-1/q'}.
		\]
		Therefore,  differentiating under the integral  and applying Minkowski's inequality,
		\[
		M_q(r,U')
		\lesssim A\int_0^1
		\frac{|f(t)|}{(1-rt)^{1+1/q'}}\,dt.
		\]
		Set $x=1-r$, $y=1-t$, and $u(y)=|f(1-y)|$. Since $1-rt\asymp x+y$,  applying Lemma \ref{lem2.1} with $a=1/q'$  we have
		\begin{align*}
			\int_0^1(1-r)^{p/q'}M_q(r,U')^p dr
			&\lesssim A^p\int_0^1|f(t)|^p dt\\
			&\lesssim A^p\|f\|_{H^p}^p.
		\end{align*}
		Also,
		\[
		|U(0)|
		=|\varphi(0)|\left|\int_0^1f(t)\,dt\right|
		\lesssim A\|f\|_{H^p}.
		\]
		Now, by Lemma \ref{lem2.4} we obtain
		\[	\|U\|_{H^p}
		\lesssim A\|f\|_{H^p} =A\|F\|_{\mathcal R_p}.\]
		Thus $\varphi\in\mathfrak M_p$, with the asserted norm estimate.
	\end{proof}
	
	\begin{theorem}\label{thm:A-strict}
		If $2<p<\infty$, then
		\[
		\mathcal A_p\subsetneq\mathfrak M_p.
		\]
	\end{theorem}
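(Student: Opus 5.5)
The inclusion $\mathcal A_p\subset\mathfrak M_p$ follows immediately from Proposition \ref{pro5.1}, since each $H(q,\infty,1/q')$ with $1<q<p$ embeds continuously into $\mathfrak M_p$. To prove that the inclusion is strict, I will use a lacunary symbol, as in the proof of Theorem \ref{thm3.5}, but with slightly smaller coefficients:
\[
\Psi_p(z)=\sum_{j=2}^\infty 2^{j/p'}j^{-\beta}z^{2^j},\qquad \beta>\frac{1}{t_p}=\frac12-\frac1p .
\]
Each dyadic block of $\Psi_p$ contains exactly one monomial. Hence $\|\Delta_j\Psi_p\|_{H^q}=2^{j/p'}j^{-\beta}$ for every $q$. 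By \eqref{eq7}, $\Psi_p\in H(q,\infty,1/q')$ would require $2^{j(1/p'-1/q')}j^{-\beta}=O(1)$. This fails for every $q<p$, because $1/q'<1/p'$. Therefore $\Psi_p\notin\mathcal A_p$.

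It remains to show that $\Psi_p\in\mathfrak M_p$. For $F=\mathcal H(f)$ with $f\in H^p$,
\[
\Psi_p*F=\sum_j 2^{j/p'}j^{-\beta}m_{2^j}(f)z^{2^j},\qquad m_N(f)=\int_0^1t^Nf(t)\,dt .
\]
By Paley's theorem (the lacunary Zygmund theorem), a Hadamard gap series belongs to $H^p$ with norm comparable to the $\ell^2$ norm of its coefficients. So it suffices to prove
\[
\sum_j j^{-2\beta}\,\bigl|2^{j/p'}m_{2^j}(f)\bigr|^2\lesssim\|f\|_{H^p}^2 .
\]
Write $a_j=2^{j/p'}|m_{2^j}(f)|$. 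The key step is the discrete Hardy-type estimate
\[
\Bigl(\sum_j a_j^p\Bigr)^{1/p}\lesssim\|f\|_{L^p(0,1)}\lesssim\|f\|_{H^p},
\]
where the second inequality is the Fej\'er--Riesz inequality. Given this, H\"older's inequality with exponents $p/2$ and $t_p/2$ (note $2/p+2/t_p=1$) gives
\[
\sum_j j^{-2\beta}a_j^2\le\Bigl(\sum_j j^{-\beta t_p}\Bigr)^{2/t_p}\Bigl(\sum_j a_j^p\Bigr)^{2/p}.
\]
The first factor is finite because $\beta t_p>1$. This yields $\|\Psi_p*F\|_{H^p}\lesssim\|F\|_{\mathcal R_p}$, so $\Psi_p\in\mathfrak M_p$.

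The main obstacle is the $\ell^p$ bound for $a_j$. I will split $(0,1)$ into the dyadic intervals $I_k=[1-2^{-k},1-2^{-k-1})$. On $I_k$ we have $t^{2^j}\le e^{-2^{j-k-1}}$. Set $b_k=2^{k/p'}\int_{I_k}|f|$; H\"older's inequality on each interval gives $\sum_k b_k^p\le\|f\|_{L^p(0,1)}^p$. Then
\[
a_j\le\sum_k 2^{(j-k)/p'}e^{-2^{j-k-1}}\,b_k .
\]
This is a convolution of $(b_k)$ with the kernel $\kappa(m)=2^{m/p'}e^{-2^{m-1}}$, where $m=j-k$. For $m\to-\infty$ the kernel decays like $2^{-|m|/p'}$, and for $m\to+\infty$ it decays super-exponentially. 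So $\kappa\in\ell^1(\mathbb Z)$, and Young's inequality gives $\|a\|_{\ell^p}\lesssim\|b\|_{\ell^p}$. This completes the plan. The bookkeeping with the exponents $p'$ on the short intervals $I_k$ is the step that needs the most care.
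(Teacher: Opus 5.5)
Your proof is correct. The inclusion $\mathcal A_p\subset\mathfrak M_p$ and the argument that $\Psi_p\notin\mathcal A_p$ via \eqref{eq7} are the same as in the paper. Where you differ is in showing $\Psi_p\in\mathfrak M_p$, and there you prove a stronger estimate than the paper needs.

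The paper takes $\beta=1$ and uses only the uniform bound
\[
a_j=2^{j/p'}|m_{2^j}(f)|\le 2^{j/p'}\|f\|_{L^p(0,1)}\Bigl(\int_0^1t^{2^jp'}\,dt\Bigr)^{1/p'}\lesssim\|f\|_{H^p},
\]
which is H\"older's inequality on all of $(0,1)$. Paley's theorem and $\sum_j j^{-2}<\infty$ then finish the proof. This works verbatim for any $\beta>\frac12$, so for the strictness claim alone you could skip the step you flag as the main obstacle.

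Your dyadic splitting of $(0,1)$ together with the discrete Young inequality upgrades this $\ell^\infty$ bound to $\|(a_j)\|_{\ell^p}\lesssim\|f\|_{H^p}$. Your bookkeeping is right: $b_k\le 2^{-1/p'}\|f\|_{L^p(I_k)}$ and $\kappa\in\ell^1(\mathbb Z)$. H\"older with exponents $p/2$ and $t_p/2$ then shows that $\sum_j 2^{j/p'}c_jz^{2^j}\in\mathfrak M_p$ for every $(c_j)\in\ell^{t_p}$.

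By \eqref{eq:dyadic-mixed}, these series are exactly the lacunary elements of $\mathcal E_p=H(p,t_p,1/p')$. So your computation is a self-contained proof of the lacunary case of the inclusion $\mathcal E_p\subset\mathfrak M_p$ in Theorem \ref{thm5.1}. You could also simply have quoted that theorem, since your $\Psi_p$ lies in $\mathcal E_p$.

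Your route buys an essentially optimal range, $\beta>\frac12-\frac1p$. To see that this is nearly sharp, run the argument of Theorem \ref{thm3.5} with the test functions $(1-z)^{-1/p}L(z)^{-\gamma}$, $\gamma>\frac1p$. This shows $\Psi_p\notin\mathfrak M_p$ whenever $\beta<\frac12-\frac1p$. The paper's route is shorter and needs nothing beyond H\"older's inequality.
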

	
	\begin{proof}
		By Proposition \ref{pro5.1}, we know that 
		\[
		\mathcal A_p=\bigcup_{1<q<p}H\left(q,\infty,\frac1{q'}\right)
		\subset\mathfrak M_p.
		\]
		It remains to show that the inclusion is proper.
		Let
		\begin{equation*}\label{eq:Psi-p}
			\Psi_p(z)=\sum_{j=2}^{\infty}\frac{2^{j/p'}}{j}z^{2^j}.
		\end{equation*}
		Let $F=\mathcal H(f)\in\mathcal R_p$. By H\"older's inequality,
		\begin{align*}
			|m_{2^j}(f)|
			&=\left|\int_0^1t^{2^j}f(t)\,dt\right|\\
			&\le
			\left(\int_0^1|f(t)|^p\,dt\right)^{1/p}
			\left(\int_0^1t^{2^jp'}\,dt\right)^{1/p'}\\
			&\lesssim 2^{-j/p'}\|f\|_{H^p}.
		\end{align*}
		Consequently,
		\[
		\Psi_p*F(z)
		=\sum_{j=2}^{\infty}
		\frac{2^{j/p'}}j m_{2^j}(f)z^{2^j}.
		\]
		Paley's theorem shows that 
		\begin{align*}
			\|\Psi_p*F\|_{H^p}
			&\asymp
			\left(\sum_{j=2}^{\infty}
			\frac{2^{2j/p'}}{j^2}|m_{2^j}(f)|^2\right)^{1/2}\\
			&\lesssim
			\left(\sum_{j=2}^{\infty}\frac1{j^2}\right)^{1/2}
			\|f\|_{H^p}.
		\end{align*}
		Hence $\Psi_p\in\mathfrak M_p$.
		
		Now fix $1<q<p$. Since the $j$-th dyadic block of $\Psi_p$ is the monomial
		\[	\Delta_j\Psi_p(z)=\frac{2^{j/p'}}jz^{2^j}, 	\]
		we have
		\[	\|\Delta_j\Psi_p\|_{H^q}=\frac{2^{j/p'}}j.	\]
		Note that $\frac1{p'}-\frac1{q'}=\frac1q-\frac1p>0$, this implies that
		$$	2^{-j/q'}\|\Delta_j\Psi_p\|_{H^q}
		=\frac{2^{j(1/p'-1/q')}}j \longrightarrow \infty \ \ \text{as} \  j\rightarrow \infty.$$
		The dyadic characterization (\ref{eq7})	shows that $\Psi_p$ does not belong to  $H(q,\infty,1/q')$ with $q<p$, and therefore
		\[	\Psi_p\in\mathfrak M_p\setminus\mathcal A_p.	\]
		The proof is complete.
	\end{proof}

	 For $p\geq 2$, the following coefficient condition is a useful sufficient criterion.
	 
	 \begin{corollary}\label{cor:lp-block-sufficient}
	 	Let $2\leq p<\infty$ and $g(z)=\sum b_kz^k$. If
	 	\begin{equation}\label{eq:lp-block}
	 		\left(\sum_{k=2^n}^{2^{n+1}-1}|b_k|^p\right)^{1/p}
	 		=O\left(2^{-n/p'}\right),
	 	\end{equation}
	 	then $\mathcal H_g:H^p\longrightarrow H^p$ is bounded.
	 \end{corollary}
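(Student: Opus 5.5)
Since $g'=\sum\gamma_kz^k$ with $\gamma_k=(k+1)b_{k+1}$, Theorem \ref{th3.2} reduces the claim to showing $g'\in\mathfrak M_p$. So fix $f\in H^p$ and $F=\mathcal H(f)$. Write $U=g'*F=\sum_k\gamma_k m_k(f)z^k$, where $m_k(f)=\int_0^1t^kf(t)\,dt$. The goal is the bound $\|U\|_{H^p}\lesssim\|f\|_{H^p}$.

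Because $p\ge2$, I will use the embedding $\mathcal D(p,2,1)\subset H^p$ from \eqref{eq2}, in its dyadic form. Littlewood--Paley theory gives, for $2\le p<\infty$,
\[
\|U\|_{H^p}^2\lesssim |U(0)|^2+\sum_n\|\Delta_nU\|_{H^p}^2 .
\]
This is the standard inequality $\|U\|_{H^p}\lesssim\|(\sum|\Delta_nU|^2)^{1/2}\|_{L^p}\le(\sum\|\Delta_nU\|_p^2)^{1/2}$, where the last step is Minkowski's inequality in $L^{p/2}$.

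To bound each block, I will use the Hausdorff--Young inequality in dual form: for $p\ge2$, $\|\sum a_kz^k\|_{H^p}\le(\sum|a_k|^{p'})^{1/p'}$. Hölder's inequality in $k$ over the block of $2^n$ terms then gives
\[
\|\Delta_nU\|_{H^p}\le\Bigl(\sum_{k\in I_n}|\gamma_k m_k|^{p'}\Bigr)^{1/p'}\le\Bigl(\sum_{I_n}|\gamma_k|^p\Bigr)^{1/p}\Bigl(\sum_{I_n}|m_k|^{p/(p-2)}\Bigr)^{(p-2)/p}.
\]
When $p=2$ the second factor is read as $\sup_{I_n}|m_k|$.

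The first factor is controlled by the hypothesis. Since $|\gamma_k|\asymp 2^n|b_{k+1}|$ on the block, we get $(\sum_{I_n}|\gamma_k|^p)^{1/p}\lesssim 2^{n}2^{-n/p'}=2^{n/p}$; the index shift only touches neighbouring blocks, which costs a harmless constant. For the moments, the Hölder estimate from the proof of Theorem \ref{thm:A-strict} gives $|m_k|\lesssim k^{-1/p'}\|f\|_{H^p}$. That is too weak as it stands, because summing over blocks yields $\sum_n 2^{2n/p}2^{-2n/p'}2^{2n(p-2)/p}$, which diverges. So I must exploit square summability in $n$ through a finer estimate on $f$.

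This is the step I expect to be the main obstacle. My plan is to use localized moments. For $k\in I_n$, I split $m_k=\int_0^{1-2^{-n}}+\int_{1-2^{-n}}^1$. On $[1-2^{-n},1]$, Hölder's inequality gives a contribution of at most $2^{-n/p'}\bigl(\int_{J_n}|f|^p\bigr)^{1/p}$, where $J_n=[1-2^{-n},1]$. The tail part decays like $e^{-k(1-t)}$ and, up to the same factor $2^{-n/p'}$, is dominated by $\sum_{m<n}2^{-(n-m)}\bigl(\int_{J_m\setminus J_{m+1}}|f|^p\bigr)^{1/p}$.

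With these bounds I get $\|\Delta_nU\|_{H^p}\lesssim 2^{n/p}\,2^{n(p-2)/p}\,2^{-n/p'}\,\varepsilon_n=\varepsilon_n$, where $\varepsilon_n$ is a discrete convolution of the sequence $a_m=\bigl(\int_{J_m\setminus J_{m+1}}|f|^p\bigr)^{1/p}$ with a summable kernel. Since $\sum a_m^p\lesssim\|f\|_{H^p}^p$ by the Fejér--Riesz inequality, we have $(a_m)\in\ell^p$. But the square sum requires $\ell^2$, and $p\ge2$ makes $\ell^p\not\subset\ell^2$, so this step is delicate.

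If this route fails, I will instead estimate $U$ directly as $U(z)=\int_0^1f(t)g'(tz)\,dt$, as in Theorem \ref{th3.4}. In that approach, I use Hausdorff--Young on the blocks of $g'$ to get $M_p(\rho,\Delta_n g')\lesssim\rho^{2^n}2^{n/p}$. Combining this with Lemma \ref{lem2.1}, applied blockwise through the weighted $\ell^2$ sum coming from $\mathcal D(p,2,1)\subset H^p$, should close the estimate. I expect the $\ell^p$-versus-$\ell^2$ issue to be resolved by pairing with $h\in H^{p'}$ via $\mathfrak B_p(g)$, using $H^{p'}\subset\mathcal D(p',p',1)$.
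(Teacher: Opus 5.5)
Your argument has a genuine gap for $p>2$, which is the only nontrivial case. The main route cannot be rescued by sharper moment estimates.

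\textbf{Where the main route fails.} The weak step is the block form of $\mathcal D(p,2,1)\subset H^p$, namely $\|U\|_{H^p}^2\lesssim|U(0)|^2+\sum_n\|\Delta_nU\|_{H^p}^2$. This inequality is true, but its right-hand side can be infinite for admissible data. Take $g(z)=\log\frac1{1-z}$, so that $b_k=1/k$ satisfies \eqref{eq:lp-block}, and take $f=f_p$ from the proof of Theorem \ref{thm3.5}. Then $U=\mathcal H(f_p)\in H^p$, and its coefficients $m_k(f_p)\asymp k^{-1/p'}(\log k)^{-1/2}$ are positive and decreasing. Apply Lemma \ref{lem2.2} on each block to the monotone ratios $m_k/m_{2^n}\asymp1$ and to their reciprocals. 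This gives
\[
\|\Delta_nU\|_{H^p}\asymp m_{2^n}\Bigl\|\sum_{k=2^n}^{2^{n+1}-1}z^k\Bigr\|_{H^p}\asymp n^{-1/2},
\]
which is not square summable. So the $\ell^p$-versus-$\ell^2$ problem you ran into is intrinsic to that embedding, not to your moment bounds.

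\textbf{Why the fallback does not repair it.} First, the bound $M_p(\rho,\Delta_ng')\lesssim\rho^{2^n}2^{n/p}$ is false. Hausdorff--Young bounds $\|\Delta_ng'\|_{H^p}$ only by the $\ell^{p'}$ norm of the block coefficients, i.e.\ by $O(2^{n/p'})$ after H\"older, and this is attained for $g'=\frac1{1-z}$. That only says $g'\in H(p,\infty,1/p')$, which Theorem \ref{thm3.5} shows is insufficient. Second, the embedding $H^{p'}\subset\mathcal D(p',p',1)$ you invoke fails for $p>2$: then $p'<2$, and \eqref{eq1} gives only $\mathcal D(p',p',1)\subset H^{p'}\subset\mathcal D(p',2,1)$. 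If it were true, you could pair it with the bound $\|\varphi*\mathcal H(f)\|_{\mathcal D(p,p,1)}\lesssim\|\varphi\|_{H(p,\infty,1/p')}\|f\|_{H^p}$ from the proof of Theorem \ref{th3.4}, which holds for every $p$. That would put all of $H(p,\infty,1/p')$ inside $\mathfrak M_p$, contradicting Theorem \ref{thm3.5}.

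\textbf{The missing idea.} Apply H\"older to the coefficients of $g$ in the opposite direction, from $\ell^p$ down to $\ell^2$ on each block:
\[
\Bigl(\sum_{k=2^n}^{2^{n+1}-1}|b_k|^2\Bigr)^{1/2}\le 2^{n(1/2-1/p)}\Bigl(\sum_{k=2^n}^{2^{n+1}-1}|b_k|^p\Bigr)^{1/p}\lesssim 2^{-n/2}.
\]
By Parseval and \eqref{eq7}, this says $g\in\Lambda(2,1/2)$, i.e.\ $g'\in H(2,\infty,1/2)$. Since $2<p$, Proposition \ref{pro5.1} with $q=2$ gives $g'\in\mathfrak M_p$, and Theorem \ref{th3.2} concludes. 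For $p=2$ the hypothesis is exactly $g\in\Lambda(2,1/2)$, and Theorem \ref{th3.4} applies. Your localized-moment argument also closes at $p=2$, since then $(a_m)\in\ell^2$.

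\textbf{Why the subcritical route works.} The key is Lemma \ref{lem2.4}. It measures $U'$ in $M_2$ means, where Parseval loses nothing, and integrates in $L^p(dr)$. So it needs only $\ell^p$-summability across dyadic scales, which is exactly what your sequence $(\varepsilon_n)$ provides. In fact, if you replace ``$\mathcal D(p,2,1)\subset H^p$ plus Hausdorff--Young'' by Lemma \ref{lem2.4} with $q=2$, your localized moment bounds become sufficient.
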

	 
	 \begin{proof}
	 For $p=2$, the condition (\ref{eq:lp-block}) is equivalent to that $g\in\Lambda(2,1/2)$, so the result is obvious. For $p>2$, by H\"{o}lder inequality we have
	 	\begin{align*}
	 		\|\Delta_ng\|_{H^2}
	 		&=\left(\sum_{k=2^n}^{2^{n+1}-1}|b_k|^2\right)^{1/2}\\
	 		&\le2^{n(1/2-1/p)}
	 		\left(\sum_{k=2^n}^{2^{n+1}-1}|b_k|^p\right)^{1/p}
	 		\lesssim2^{-n/2}.
	 	\end{align*}
	 	This shows that  $g\in\Lambda(2,1/2)$, so $g'\in H(2,\infty,1/2)\subset\mathcal A_p\subset\mathfrak M_p$.
	 \end{proof}
	 
	 \begin{remark}
	 In \cite{ann}, the authors proved that if $2\leq p <\infty$ and $\sup_{k\geq 0}k|b_{k}|<\infty$, then $\hg:H^{p}\longrightarrow H^{p}$ is bounded. This result is contained in  	Corollary \ref{cor:lp-block-sufficient}.
	 \end{remark}

	 The condition in \eqref{eq:lp-block} is not necessary, even for nonnegative coefficients. A complete elementary characterization is available under the natural additional assumption of monotonicity.
	 
	 \begin{theorem}\label{th3.9}
	 	Let $1<p<\infty$, and suppose
	 	\[	g(z)=\sum_{n=0}^\infty b_nz^n,
	 	\qquad	b_n\ge0,	\qquad
	 	b_{n+1}\le b_n. 	\]
	 	Then the following are equivalent:
	 
	 		(i) $\mathcal H_g:H^p\to H^p$ is bounded;
	 	 
	 	 (ii) $g'\in\mathfrak M_p$;
	 	
	 	(iii) $\sup_{n\ge1}n b_n<\infty$;
	 	
	 	(iv)	\[
	 		\left(\sum_{k=2^j}^{2^{j+1}-1}b_k^p\right)^{1/p}
	 		=O(2^{-j/p'}).
	 		\]
	 \end{theorem}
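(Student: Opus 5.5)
(i)$\Leftrightarrow$(ii) is Theorem \ref{th3.2}, so it remains to tie (ii) to the coefficient conditions (iii) and (iv). I would prove (iii)$\Rightarrow$(ii), (ii)$\Rightarrow$(iii) and (iii)$\Leftrightarrow$(iv).

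\textbf{(iii)$\Leftrightarrow$(iv).} This follows from monotonicity. If $nb_n\le C$, then every $b_k$ in the $j$-th block is at most $C2^{-j}$, so the block $\ell^p$ norm is at most $C2^{-j}2^{j/p}=C2^{-j/p'}$. Conversely, assume (iv). For $2^{j+1}\le n<2^{j+2}$ we have $b_n\le b_k$ for all $k$ in block $j$. Hence
\[
2^{j/p}b_n\le\Bigl(\sum_{k=2^j}^{2^{j+1}-1}b_k^p\Bigr)^{1/p}\lesssim2^{-j/p'},
\]
which gives $b_n\lesssim 2^{-j}\asymp 1/n$. Small $n$ are trivial.

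\textbf{(iii)$\Rightarrow$(ii).} With $\gamma_k=(k+1)b_{k+1}$, condition (iii) means $\sup_k|\gamma_k|<\infty$. I would avoid the growth-space route, which fails for $p>2$, and go through Lemma \ref{lem2.2} instead. Monotonicity gives
\[
\sum_{k\in\text{block}}|\gamma_{k+1}-\gamma_k|\le\sum\bigl((k+2)|b_{k+2}-b_{k+1}|+b_{k+2}\bigr).
\]
The second sum is $\lesssim 2^j\cdot 2^{-j}=O(1)$. The first is $\lesssim 2^{j}\sum(b_{k+1}-b_{k+2})$, which telescopes to at most $2^{j+2}b_{2^j}=O(1)$. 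So $\{\gamma_k\}$ satisfies the hypotheses of Lemma \ref{lem2.2}. Therefore
\[
\|g'*F\|_{H^p}\lesssim\|F\|_{H^p}\lesssim\|F\|_{\mathcal R_p},
\]
using Proposition \ref{pro3.1} for the last inequality. This works for every $1<p<\infty$.

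\textbf{(ii)$\Rightarrow$(iii).} By Theorem \ref{th3.3}, $g'\in H(p,\infty,1/p')$, and by \eqref{eq7}, $\|\Delta_n g'\|_{H^p}\lesssim 2^{n/p'}$. I then need a lower bound for the block norm using positivity and monotonicity of the coefficients. The coefficients $\gamma_k$ are nonnegative but not necessarily monotone, so I would test the block against a Fej\'er-type kernel. Take $h$ to be the de la Vallée Poussin type polynomial $V_n$, with coefficients $\ge0$, equal to $1$ on the upper half of block $n$ and $\|V_n\|_{H^{p'}}\asymp 2^{n/p}$. The dual pairing then gives
\[
\sum_{k=3\cdot2^{n-1}}^{2^{n+1}-1}\gamma_k\lesssim\|\Delta_ng'\|_{H^p}\,2^{n/p}\lesssim 2^{n}.
\]
Each term in this sum satisfies $\gamma_k\ge 2^n b_{2^{n+1}}$, and there are $\asymp2^n$ terms. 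So $2^{2n}b_{2^{n+1}}\lesssim2^n$, i.e. $b_{2^{n+1}}\lesssim 2^{-n}$, and monotonicity extends this to $nb_n=O(1)$.

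The main obstacle is this last step: the duality lower bound has to be set up carefully so that the test polynomial has nonnegative coefficients and the correct $H^{p'}$ norm. Alternatively one could test directly against $F=\mathcal H(K_N)$, whose coefficients are comparable to $1$ on $[N,2N)$, and pair it with $K_N$-type polynomials in the same way.
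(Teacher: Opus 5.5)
Your proof is correct. The equivalence (iii)$\Leftrightarrow$(iv) is handled exactly as in the paper, but both nontrivial directions take a different route.

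\textbf{(iii)$\Rightarrow$(ii).} The paper uses Lemma~\ref{lem2.2} only to get $\|\Delta_n g'\|_{H^q}\lesssim 2^{n/q'}$ for every $q>1$, i.e.\ $g'\in H(q,\infty,1/q')$. It must then split into the cases $1<p\le2$ (Theorem~\ref{th3.4}) and $p>2$ (Proposition~\ref{pro5.1} with some $q<p$). You instead read Lemma~\ref{lem2.2} as saying $\{\gamma_k\}\in(H^p,H^p)$, and combine this with the continuous inclusion $\mathcal R_p\hookrightarrow H^p$ of Proposition~\ref{pro3.1}. That gives $(H^p,H^p)\subset\mathfrak M_p$ in one line, uniformly in $p$, with no appeal to Lemmas~\ref{lem2.1}, \ref{lem2.3}, \ref{lem2.4} or the embeddings (\ref{eq1}). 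The paper's route buys the extra information that such $g'$ lie in the subcritical class $\mathcal A_p$; yours is shorter. Your aside that the growth-space route ``fails for $p>2$'' is only true for the critical space $H(p,\infty,1/p')$; the paper's subcritical version works.

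\textbf{(ii)$\Rightarrow$(iii).} The paper converts $g'\in H(p,\infty,1/p')$ into $\|\Delta_n g\|_{H^p}\lesssim 2^{-n/p}$. It then bounds $|\Delta_n g(e^{i\theta})|$ from below on the arc $|\theta|\le c2^{-n}$, where positivity keeps all terms in a sector. You dualize instead, pairing $\Delta_n g'$ directly with a nonnegative-coefficient test polynomial, which avoids passing from blocks of $g'$ to blocks of $g$.

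The step you flag as the main obstacle is easier than you fear; no de la Vall\'ee Poussin kernel is needed. Take $h=\sum_{k=2^n}^{2^{n+1}-1}z^k$. It has coefficients equal to $1$ on the whole block, and $\|h\|_{H^{p'}}\asymp 2^{n/p}$ by the analytic Dirichlet-kernel estimate already used in Theorem~\ref{th3.3} (valid since $1<p'<\infty$). Each of the $2^n$ terms satisfies $\gamma_k=(k+1)b_{k+1}\ge 2^n b_{2^{n+1}}$. So H\"older's inequality gives $2^{2n}b_{2^{n+1}}\le\sum_{k=2^n}^{2^{n+1}-1}\gamma_k\le\|\Delta_n g'\|_{H^p}\|h\|_{H^{p'}}\lesssim 2^{n}$ directly.
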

	 
	 \begin{proof}
	 	The equivalence of (i) and (ii) is  contained in Theorem \ref{th3.2}.
	 	
	 	(iii)$\Longleftrightarrow$(iv).	Assume  that $b_k\le C/(k+1)$. For $2^j\le k<2^{j+1}$, it is easy to see that
	 	\[	b_k\le C2^{-j},	\]
	 	and therefore
	 	\[
	 	\sum_{k=2^j}^{2^{j+1}-1}b_k^p
	 	\le C^p2^j2^{-jp}=C^p2^{-j(p-1)}.
	 	\]
	 	Taking $p$-th roots gives (iv).
	 	
	 	Conversely, suppose (iv) holds. The  monotonicity  of $b_{k}$ implies that 
	 	\[
	 	2^jb_{2^{j+1}}^p
	 	\le\sum_{k=2^j}^{2^{j+1}-1}b_k^p
	 	\lesssim2^{-j(p-1)}.
	 	\]
	 	Hence $b_{2^{j+1}}\lesssim2^{-j}$. If $2^j\le n<2^{j+1}$, then
	 	\[
	 	b_n\le b_{2^j}\lesssim2^{-j}\lesssim n^{-1}.
	 	\]
	 	Thus, (iii) holds.
	 	
	 (ii)$\Longrightarrow$(iii).	Assume $g'\in\mathfrak M_p$. By  Theorem \ref{th3.3},
	 $	g'\in H(p,\infty,1/p'). $
	 	Equivalently, $g\in\Lambda(p,1/p)$, and the dyadic characterization gives
	 	\[
	 	\left\|\sum_{k=2^n}^{2^{n+1}-1}b_kz^k\right\|_{H^p}
	 	\lesssim 2^{-n/p}.
	 	\]
	 	Choose a fixed $c>0$ so small that $|(k-2^n)\theta|\le1/4$ whenever $2^{n}\le k<2^{n+1}$ and $|\theta|\le c2^{-n}$. Then, for such $k$ and $\theta$,
	 	$$ Re \left (e^{i(k-2^{n})\theta}\right )=\cos((k-2^{n})\theta)\ge \cos\frac{1}{4}=:c_{0}>0.$$
	 	Therefore all the numbers $e^{i(k-2^{n})\theta}$	lie in a fixed sector around the positive real axis.
	 	
	 For	$|\theta|\le c2^{-n}$, we have
	 	\begin{align*}
	 		\left|\sum_{k=2^{n}}^{2^{n+1}-1}b_ke^{ik\theta}\right|
	 		&=\left|e^{i2^{n}\theta}\sum_{k=2^{n}}^{2^{n+1}-1}b_ke^{i(k-2^{n})\theta}\right|\\
	 		& \ge Re \left (\sum_{k=2^{n}}^{2^{n+1}-1}b_ke^{i(k-2^{n})\theta}\right )\\
	 		& \ge \sum_{k=2^{n}}^{2^{n}-1}b_k Re \left (e^{i(k-2^{n})\theta}\right )\\
	 		&\ge c_0\sum_{k=2^{n}}^{2^{n+1}-1}b_k\ge c_02^{n}b_{2^{n+1}}.
	 	\end{align*}
	 	Integrating over the arc $|\theta|\le c2^{-n}$ yields
	 $$	2^{-n}\bigl(2^{n}b_{2^{n+1}}\bigr)^p	\lesssim	\left\|\sum_{k=2^{n}}^{2^{n+1}-1}b_kz^k\right\|_{H^p}^p
	 		\lesssim 2^{-n}.	$$
	 	Thus,  $b_{2^{n+1}}\lesssim 2^{-n}$. Finally, by monotonicity, if $2^{n+1}\leq k \leq 2^{n+2}$,
	  then 
	  $$b_{k}\leq b_{2^{n+1}} \lesssim 2^{n} \asymp \frac{1}{k},$$
	 	which yields  $b_{k}\lesssim k^{-1}$.

	 (iii)$\Longrightarrow$(ii).
	 	Assume $b_n\le C/(n+1)$ and set
	 $ 	a_k=(k+1)b_{k+1},$
	 	so that $g'(z)=\sum_{k\ge0}a_kz^k$. Clearly, $|a_k|\le C$. We claim that the total variation of $(a_k)$ on every dyadic block is uniformly bounded. Since $b_k$ is decreasing,
	 	\begin{align*}
	 		|a_{k+1}-a_k|
	 		&=\left|(k+2)b_{k+2}-(k+1)b_{k+1}\right|\\
	 		&\le b_{k+2}+(k+1)(b_{k+1}-b_{k+2}).
	 	\end{align*}
	 	For $2^n\le k<2^{n+1}-1$, we have
	 	\[\sum_{k=2^{n}}^{2^{n+1}-1} b_{k+2}\le 2^{n}b_{2^{n}+2}\lesssim1,	\]
	 and 
	 	\[
	 	\sum_{k=2^{n}}^{2^{n+1}-2}(k+1)(b_{k+1}-b_{k+2})
	 	\le 2^{n+1}\sum_{k=2^{n}}^{2^{n+1}-2}(b_{k+1}-b_{k+2})
	 	\le 2^{n+1}b_{2^{n}+1}\lesssim1.
	 	\]
	 	Thus,
	 	\[	\sup_n\left(
	 	|a_{n}|+\sum_{k=2^{n}}^{2^{n+1}-2}|a_{k+1}-a_k|
	 	\right)<\infty.
	 	\]
	 	By Lemma \ref{lem2.2}, for every $1<q<\infty$,
	 	$$
	 		\left\|\sum_{k=2^{n}}^{2^{n+1}-1}a_kz^k\right\|_{H^q}
	 		\lesssim
	 		\left\|\sum_{k=2^{n}}^{2^{n+1}-1}z^k\right\|_{H^q}
	 		\asymp 2^{n/q'}.
	 $$
	 	Therefore, $g'\in H(q,\infty,1/q') $	for every $q>1$. If $1<p\le2$, take $q=p$ and use Theorem \ref{th3.4}. If $p>2$, choose any $q$ with $1<q<p$ and use Proposition \ref{pro5.1}. In either case $g'\in\mathfrak M_p$.
	 \end{proof}
	 
	 \begin{remark}\label{rem:positivity-alone}
  The	positivity alone  cannot replace monotonicity in  Theorem \ref{th3.9}. The symbol $g_p$ in  Corollary \ref{cor:counterexample-symbol} has nonnegative Taylor coefficients and belongs to $\Lambda(p,1/p)$, but $\mathcal H_{g_p}$ is unbounded for every $p>2$.
	 \end{remark}

	Let $p>2$ and define $t_p$ by
	\begin{equation*}\label{eq:t-p}
		\frac1{t_p}=\frac12-\frac1p,
		\qquad
		t_p=\frac{2p}{p-2}.
	\end{equation*} 
	
	We write  $\mathcal E_p:=H(p,t_{p},\frac{1}{p'})$. 
	For $p>2$, the multiplier space $\mathfrak M_p$ lies strictly below the
	endpoint coefficient space
	$$\mathfrak M_p \subsetneq H(p,\infty,\frac{1}{p'}).$$
	The strict inclusion above leaves open the question of how far the multiplier space is from the endpoint  space. The critical
	exponent
	\[	\frac1{t_p}=\frac12-\frac1p 	\]
	arises naturally from the relation between $H^p$ and $H^2$, and leads to the
	space
	\[	\mathcal E_p=H\left(p,t_p,\frac1{p'}\right).	\]
	The following result shows that this space is still too small: the multiplier
	space contains $\mathcal E_p$ strictly.

	\begin{theorem}\label{thm5.1}
		If $2<p<\infty$, then
		\[	\mathcal E_p\subsetneq\mathfrak M_p.	\]
	\end{theorem}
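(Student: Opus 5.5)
The plan is to prove the inclusion $\mathcal E_p\subset\mathfrak M_p$ by a dyadic block argument, and then get strictness from the classical Hilbert operator itself.

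\emph{Inclusion.} Let $\varphi\in\mathcal E_p$, $F=\mathcal H(f)$ with $f\in H^p$, and $U=\varphi*F$. Since $p>2$, the embedding $\mathcal D(p,2,1)\subset H^p$ in \eqref{eq2} together with the dyadic description \eqref{eq:dyadic-mixed} gives
\[
\|U\|_{H^p}\lesssim |U(0)|+\Bigl(\sum_{n\ge0}\|\Delta_nU\|_{H^p}^2\Bigr)^{1/2}.
\]
We have $\Delta_nU=\Delta_n\varphi*\Delta_nF$. On the $n$-th block the coefficients of $F$ are the moments $m_k(f)=\int_0^1t^kf(t)\,dt$. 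Put
\[
\mu_n=\int_0^1t^{2^n}|f(t)|\,dt .
\]
For $2^n\le k<2^{n+1}$ we have $|m_k(f)|\le\mu_n$. We also have
\[
\sum_{k}|m_{k+1}(f)-m_k(f)|\le\int_0^1\sum_{k\ge 2^n}t^k(1-t)|f(t)|\,dt\le\mu_n .
\]
Hence, after dividing by $\mu_n$, the block sequence of moments satisfies the hypotheses of Lemma \ref{lem2.2}. This gives $\|\Delta_nU\|_{H^p}\lesssim\mu_n\|\Delta_n\varphi\|_{H^p}$.

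Now write $\|\Delta_nU\|_{H^p}\lesssim\bigl(2^{-n/p'}\|\Delta_n\varphi\|_{H^p}\bigr)\bigl(2^{n/p'}\mu_n\bigr)$. We apply H\"older's inequality in $\ell^2$ with exponents $t_p/2$ and $p/2$; this is legitimate since $\frac12=\frac1{t_p}+\frac1p$. By \eqref{eq:dyadic-mixed}, the first factor is controlled by $\|\varphi\|_{\mathcal E_p}$ in $\ell^{t_p}$. It therefore remains to show
\[
\sum_n 2^{n(p-1)}\mu_n^p\lesssim\int_0^1|f(t)|^p\,dt\lesssim\|f\|_{H^p}^p,
\]
where the second inequality is Fej\'er--Riesz. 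This is the main technical step. I will split $\mu_n$ over the intervals $[1-2^{-m},1-2^{-m-1}]$, on which $t^{2^n}\lesssim e^{-2^{n-m}}$. This reduces the estimate to a discrete Hardy-type inequality with geometrically decaying kernel, which follows by H\"older and Schur's test. Alternatively one can dominate the sum by a continuous version and invoke Lemma \ref{lem2.1}. Finally, $|U(0)|\lesssim|\varphi(0)|\,\|f\|_{H^p}$ as in Theorem \ref{th3.4}. Together these give $\|\varphi\|_{\mathfrak M_p}\lesssim\|\varphi\|_{\mathcal E_p}$.

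\emph{Strictness.} Take $\varphi(z)=\frac1{1-z}$, which is $g'$ for $g(z)=\log\frac1{1-z}$. For this symbol $\mathcal H_g=\mathcal H$, which is bounded on $H^p$ by Proposition \ref{pro3.1}. (Equivalently, $b_n=1/n$ satisfies condition (iii) of Theorem \ref{th3.9}.) So $\varphi\in\mathfrak M_p$ by Theorem \ref{th3.2}. On the other hand $\|\Delta_n\varphi\|_{H^p}\asymp 2^{n/p'}$ by the Dirichlet-kernel estimate used in Theorem \ref{th3.3}. Thus $2^{-n/p'}\|\Delta_n\varphi\|_{H^p}\asymp1$, which is not in $\ell^{t_p}$ because $t_p<\infty$. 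By \eqref{eq:dyadic-mixed}, $\varphi\notin\mathcal E_p$, so the inclusion is strict.
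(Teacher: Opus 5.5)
Your proof is correct. For strictness you use the same witness as the paper. The paper takes the rotated Cauchy kernels $\kappa_\zeta(z)=1/(1-\overline\zeta z)$, and your $1/(1-z)$ is the case $\zeta=1$. The paper gets membership in $\mathfrak M_p$ directly from $\kappa_\zeta*F=F(\overline\zeta z)$, and your detour through $\mathcal H_g=\mathcal H$ and Theorem~\ref{th3.2} amounts to the same thing. It checks non-membership via $M_p(r,\kappa_\zeta)\asymp(1-r)^{-1/p'}$, which is equivalent to your dyadic Dirichlet-kernel count.

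For the inclusion you take a genuinely different, discrete route. The paper works with integral means:
\begin{itemize}
\item from $DF(z)=\int_0^1 f(t)(1-tz)^{-2}\,dt$ it gets $M_1(r,DF)\lesssim\int_0^1|f(t)|(1-rt)^{-1}\,dt$;
\item Lemma~\ref{lem2.1} with $a=0$ then puts $M_1(r,DF)$ in $L^p(dr)$;
\item the convolution inequality $M_p(r^2,\varphi*DF)\le M_p(r,\varphi)M_1(r,DF)$ and H\"older with exponents $t_p/2$, $p/2$ give $U\in\mathcal D(p,2,1)\subset H^p$.
\end{itemize}

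In your version, $2^n\mu_n$ plays the role of $M_1(1-2^{-n},DF)$. The moments from the $n$-th block onward are bounded by $\mu_n$ and have total variation at most $\mu_n$, so Lemma~\ref{lem2.2} yields the blockwise bound $\|\Delta_n(\varphi*F)\|_{H^p}\lesssim\mu_n\|\Delta_n\varphi\|_{H^p}$. The discrete Hardy inequality you then need is the sequence analogue of Lemma~\ref{lem2.1} with $a=0$. Your splitting does give it: $2^{n/p'}\mu_n\le\sum_m K(n-m)\|f\|_{L^p(I_m)}$ with $K\in\ell^1(\mathbb Z)$, so Young's convolution inequality finishes.

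The paper's route is shorter and avoids the multiplier Lemma~\ref{lem2.2}. It also uses nothing about $F$ beyond $\int_0^1 M_1(r,DF)^p\,dr<\infty$. Yours exploits the bounded-variation structure of moment sequences, the same idea used in Theorem~\ref{th3.3}. It makes visible that the $\ell^{t_p}$ condition defining $\mathcal E_p$ is precisely the H\"older partner of the $\ell^p$ bound on $(2^{n/p'}\mu_n)$.

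The only step you leave implicit is the routine comparison $\|\Delta_nDU\|_{H^p}\asymp 2^n\|\Delta_nU\|_{H^p}$, again from Lemma~\ref{lem2.2}. You need it to rewrite the $\mathcal D(p,2,1)$ norm in terms of the blocks $\Delta_nU$.
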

	
	\begin{proof}
		We first prove the continuous inclusion and then exhibit a Cauchy kernel which witnesses strictness.
		
		Let $F=\mathcal H(f)\in\mathcal R_p$. Since
		\[
		F(z)=\int_0^1\frac{f(t)}{1-tz}\,dt,
		\]
		we have
		\[
		DF(z)=(zF(z))'
		=\int_0^1\frac{f(t)}{(1-tz)^2}\,dt.
		\]
		By	Minkowski's inequality and the identity
		\[	\frac1{2\pi}\int_0^{2\pi}
		\frac{d\theta}{|1-re^{i\theta}|^2}
		\asymp\frac1{1-r^2},
		\qquad0\le a<1,	\]
		we have
		\begin{align*}
			M_1(r,DF)
			&\le\int_0^1|f(t)|
			M_1\left(r,\frac1{(1-tz)^2}\right)\,dt\\
			& \lesssim\int_0^1\frac{|f(t)|}{1-rt}\,dt.
		\end{align*}
		With $x=1-r$, $y=1-t$, the last denominator $1-rt$   is comparable to $x+y$. The case $a=0$  of Lemma \ref{lem2.2} and the Fej\'er--Riesz inequality  imply that
		\begin{equation}\label{eq:DF-estimate}
			\left(\int_0^1M_1(r,DF)^p\,dr\right)^{1/p}
			\lesssim \|f\|_{H^p}.
		\end{equation}
		
		Let $\varphi\in\mathcal E_p=H(p,t_p,1/p')$ and put $U=\varphi*F$. It obvious that
		\[	DU=\varphi*DF.	\]
		By	Young's inequality  we have
		\[
		M_p(r^2,DU)
		\le M_p(r,\varphi)M_1(r,DF).
		\]
		Changing variables $s=r^2$ in the $\mathcal D(p,2,1)$ norm and using $1-r^2\asymp1-r$, we obtain
		\begin{align*}
			\|U\|_{\mathcal D(p,2,1)}
			&\lesssim |U(0)|+
			\left\|(1-r)^{1/2}M_p(r,\varphi)M_1(r,DF)\right\|_{L^2(dr)}.
		\end{align*}
		The exponent identities
		\[
		\frac12=\frac1{t_p}+\frac1p,
		\qquad
		\frac1{p'}-\frac1{t_p}=\frac12
		\]
		show that
		\[
		\left\|(1-r)^{1/2}M_p(r,\varphi)\right\|_{L^{t_p}}
		=\|\varphi\|_{H(p,t_p,1/p')}.
		\]
		Using	H\"older's inequality with exponents $t_p/2$ and $p/2$, together with \eqref{eq:DF-estimate} we obtain 
		\begin{align*}
			\|U\|_{\mathcal D(p,2,1)}
			&\lesssim \|\varphi\|_{H(p,t_p,1/p')}	\|f\|_{H^p}.
		\end{align*}
		The constant term satisfies the same estimate directly. By (\ref{eq2}), $\mathcal D(p,2,1)\hookrightarrow H^p$ for $p>2$. Hence
		\[	\|\varphi*F\|_{H^p}
		\lesssim \|\varphi\|_{\mathcal E_p}\|F\|_{\mathcal R_p}.	\]
		This shows that $\mathcal E_p\subset\mathfrak M_p$.

		Fix $\zeta\in\mathbb T$ and set
		\[
		\kappa_\zeta(z)=\frac1{1-\overline\zeta z}.
		\]
		If $F(z)=\sum a_nz^n\in\mathcal R_p$, then
		\[
		(\kappa_\zeta*F)(z)
		=\sum_{n=0}^{\infty}\overline\zeta^{\,n}a_nz^n
		=F(\overline\zeta z).
		\]
		Thus
		\[
		\|\kappa_\zeta*F\|_{H^p}=\|F\|_{H^p}
		\lesssim\|F\|_{\mathcal R_p},
		\]
		and $\kappa_\zeta\in\mathfrak M_p$.
		
		Rotations do not change integral means, and the standard kernel estimate gives
		\[
		M_p(r,\kappa_\zeta)\asymp (1-r)^{-1/p'}.
		\]
		Therefore,
		\begin{align*}
			\|\kappa_\zeta\|_{H(p,t_p,1/p')}^{t_p}
			&\asymp\int_0^1
			(1-r)^{t_p/p'-1}(1-r)^{-t_p/p'}\,dr\\
			&=\int_0^1\frac{dr}{1-r}=\infty.
		\end{align*}
		Hence $\kappa_\zeta\notin\mathcal E_p$, this means  that the inclusion is proper.
	\end{proof}
	
	Combining the preceding results, for $p>2$ we have 
	\[
	\mathcal A_p\subsetneq\mathfrak M_p
	\subsetneq H\left(p,\infty,\frac1{p'}\right),
	\qquad
	\mathcal E_p\subsetneq\mathfrak M_p.
	\]
	
	In \cite{Blas2022}, Blasco proved that if $p>2$ and $g\in \mathcal{D}(p,t_{p},\frac{1}{p'})$, then $\hg:L^{p}(0,1)\longrightarrow H^{p}$ is bounded. Theorem \ref{thm5.1}  can also be regarded as a complement to this result.

	 \section{Elementary structure of the  multiplier space $\mathfrak M_p$ }
	 
	\ \ \ \ \  In this section, we study the elementary structure of the multiplier space  $\mathfrak M_p$. We first establish some basic functional-analytic properties of  $\mathfrak M_p$, including completeness, invariance under rotation and dilation, stability under Hadamard multiplication, and the inclusion of polynomials. We then investigate the dependence of $\mathfrak M_p$ on the exponent $p$.
	\begin{proposition}\label{prop:structure}
		For  $1<p<\infty$, the space $\mathfrak M_p$ has the following properties.

		(1) $\mathfrak M_p$ is a Banach space.
		
		(2) If $0<r<1$ and $\varphi_r(z)=\varphi(rz)$, then
			\[	\|\varphi_r\|_{\mathfrak M_p}\le\|\varphi\|_{\mathfrak M_p}.	\]
			
		(3) If $|\eta|=1$ and $\varphi_\eta(z)=\varphi(\eta z)$, then
			\[	\|\varphi_\eta\|_{\mathfrak M_p}=\|\varphi\|_{\mathfrak M_p}.	\]
			
		(4) $\mathfrak M_p$ is a module over $(H^p,H^p)$ under Hadamard multiplication. More precisely, if
			$\psi\in(H^p,H^p)$ and $\varphi\in\mathfrak M_p$, then
			\[
			\psi*\varphi\in\mathfrak M_p
			\quad\text{and}\quad
			\|\psi*\varphi\|_{\mathfrak M_p}
			\le \|\psi\|_{(H^p,H^p)}\|\varphi\|_{\mathfrak M_p}.
			\]
			
		(5) Every polynomial belongs to $\mathfrak M_p$.
	\end{proposition}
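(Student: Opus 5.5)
The five properties are essentially formal consequences of the definition of $\mathfrak M_p$ as a multiplier space with source $\mathcal R_p$ (a Banach space by Proposition \ref{pro3.1}) and target $H^p$. The plan is to verify each using only the norm
\[
\|\varphi\|_{\mathfrak M_p}=\sup_{F\ne0}\frac{\|\varphi*F\|_{H^p}}{\|F\|_{\mathcal R_p}},
\]
Theorem \ref{th3.3}, and elementary properties of Hardy spaces.

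\textbf{(1) Completeness.} Let $(\varphi_j)$ be Cauchy in $\mathfrak M_p$. By Theorem \ref{th3.3}, $(\varphi_j)$ is Cauchy in $H(p,\infty,1/p')$. This space is continuously embedded in $H(\mathbb D)$, so $\varphi_j\to\varphi$ locally uniformly and every Taylor coefficient converges. For a fixed $F\in\mathcal R_p$, the sequence $\varphi_j*F$ is Cauchy in $H^p$, because
\[
\|(\varphi_j-\varphi_k)*F\|_{H^p}\le\|\varphi_j-\varphi_k\|_{\mathfrak M_p}\|F\|_{\mathcal R_p}.
\]
Hence it converges in $H^p$, and its limit has the coefficients of $\varphi*F$. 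Taking $k\to\infty$ in the estimate and using Fatou's lemma (or lower semicontinuity of the $H^p$ norm under coefficientwise convergence) gives
\[
\|(\varphi_j-\varphi)*F\|_{H^p}\le \varepsilon_j\|F\|_{\mathcal R_p}\quad\text{with }\varepsilon_j\to0.
\]
This shows $\varphi\in\mathfrak M_p$ and $\varphi_j\to\varphi$ in $\mathfrak M_p$. The fact that $\|\cdot\|_{\mathfrak M_p}$ is a norm, and not just a seminorm, also follows from Theorem \ref{th3.3}.

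\textbf{(2) and (3) Dilation and rotation.} Use the identity $(\varphi_r*F)(z)=(\varphi*F)(rz)$, and its analogue with $\eta$ in place of $r$. Then
\[
\|\varphi_r*F\|_{H^p}=\sup_\rho M_p(r\rho,\varphi*F)\le\|\varphi*F\|_{H^p},
\]
which gives (2). For (3), rotation invariance of the integral means gives equality in one direction, and applying the same argument with $\bar\eta$ gives the reverse inequality.

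\textbf{(4) Module property.} Hadamard multiplication is associative at the coefficient level, so $(\psi*\varphi)*F=\psi*(\varphi*F)$. Therefore
\[
\|(\psi*\varphi)*F\|_{H^p}\le\|\psi\|_{(H^p,H^p)}\|\varphi*F\|_{H^p}\le\|\psi\|_{(H^p,H^p)}\|\varphi\|_{\mathfrak M_p}\|F\|_{\mathcal R_p}.
\]

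\textbf{(5) Polynomials.} By linearity it suffices to treat monomials $z^n$. We have $z^n*F=\widehat F(n)z^n$, and
\[
|\widehat F(n)|\le\|F\|_{H^p}\lesssim\|F\|_{\mathcal R_p}
\]
by Proposition \ref{pro3.1}. Hence $\|z^n\|_{\mathfrak M_p}\lesssim1$. Alternatively, $z^n=\kappa_1*z^n$ and every polynomial lies in $(H^p,H^p)$, since coefficient functionals are bounded; combining this with (4) and the fact that $\kappa_1\in\mathfrak M_p$ (shown in the proof of Theorem \ref{thm5.1}) gives the same conclusion.

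The only step with any real content is (1). The point to check carefully there is that the coefficientwise limit of $\varphi_j*F$ agrees with $\varphi*F$, and I will use Theorem \ref{th3.3} to obtain the coefficient convergence of $\varphi_j$. Everything else is routine.
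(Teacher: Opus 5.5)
Your proposal is correct. Parts (2)--(4) are the paper's arguments verbatim, and the skeleton of (1) is also the same: coefficientwise limit, then $\varphi_j*F$ Cauchy in $H^p$, identification of the limit, then $k\to\infty$. The differences are in how you obtain coefficient convergence in (1) and in the bound you use in (5).

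\textbf{Part (1).} The paper does not invoke Theorem \ref{th3.3}. It tests against the single function $F_0=\mathcal H(1)=\sum_{n\ge0}z^n/(n+1)$, which satisfies $\|F_0\|_{\mathcal R_p}=1$ and has no vanishing coefficients. The bound $|\widehat h(n)|\le\|h\|_{H^p}$ then gives $|c_n^{(j)}-c_n^{(k)}|\le(n+1)\|\varphi_j-\varphi_k\|_{\mathfrak M_p}$ and $|c_n|\le C(n+1)$. This yields coefficient convergence and analyticity of the limit, and implicitly the definiteness of the norm. Your appeal to Theorem \ref{th3.3} is legitimate and not circular. However, it imports the block-multiplier machinery (Lemma \ref{lem2.2}, dyadic projections, the Dirichlet-kernel estimate) for what is a one-line observation. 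The Fatou/lower-semicontinuity remark is also unnecessary: once $\varphi_k*F\to\varphi*F$ in $H^p$, the estimate passes to the limit directly.

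\textbf{Part (5).} Your bound $\|z^n\|_{\mathfrak M_p}\lesssim1$, via $|\widehat F(n)|\le\|F\|_{H^p}$ and the continuity of $\mathcal R_p\hookrightarrow H^p$, suffices for membership. Your alternative route through $\kappa_1$ and (4) is also fine. The paper instead estimates the moments $m_n(f)=\int_0^1t^nf(t)\,dt$ by H\"older and Fej\'er--Riesz, obtaining $|m_n(f)|\lesssim(n+1)^{-1/p'}\|f\|_{H^p}$. This gives the sharper quantitative bound $\|P\|_{\mathfrak M_p}\lesssim\sum_n|d_n|(n+1)^{-1/p'}$, which reflects the decay of the coefficients of functions in $\mathcal R_p$.
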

	
	\begin{proof}
	Throughout the proof,  we put
		\[
		F_0(z)=\mathcal H(1)(z)=\sum_{n=0}^{\infty}\frac{z^n}{n+1}.
		\]
		Then $F_0\in\mathcal R_p$ and $\|F_0\|_{\mathcal R_p}=\|1\|_{H^p}=1$.
		We shall repeatedly use the elementary coefficient estimate
		\begin{equation}\label{eq:Hp-coefficient-functional}
			|\widehat h(n)|\le \|h\|_{H^p},
			\qquad h\in H^p,\quad n\ge0,
		\end{equation}
		which follows by writing $\widehat h(n)r^n$ as the $n$th Fourier coefficient of
		$h(re^{i\theta})$ and then letting $r\to1^-$.
		
		\smallskip
		\noindent\emph{Proof of \textup{(i)}.}
		Let $\{\varphi_j\}_{j\ge1}$ be a Cauchy sequence in $\mathfrak M_p$, where
		\[
		\varphi_j(z)=\sum_{n=0}^{\infty}c_n^{(j)}z^n.
		\]
		Since
		\[
		(\varphi_j-\varphi_k)*F_0
		=\sum_{n=0}^{\infty}
		\frac{c_n^{(j)}-c_n^{(k)}}{n+1}z^n,
		\]
		\eqref{eq:Hp-coefficient-functional} gives, for every $n\ge0$,
		\begin{align*}
			\frac{|c_n^{(j)}-c_n^{(k)}|}{n+1}
			&\le \|(\varphi_j-\varphi_k)*F_0\|_{H^p}\notag\\
			&\le \|\varphi_j-\varphi_k\|_{\mathfrak M_p}.
			\label{eq:coefficient-Cauchy-Mp}
		\end{align*}
		Thus $\{c_n^{(j)}\}_{j\ge1}$ is a Cauchy sequence of complex numbers for each
		fixed $n$. Let
		\[
		c_n=\lim_{j\to\infty}c_n^{(j)}.
		\]
		Since a Cauchy sequence is bounded in norm, there is $C>0$ such that
		$\|\varphi_j\|_{\mathfrak M_p}\le C$ for all $j$. Applying
		\eqref{eq:Hp-coefficient-functional} to $\varphi_j*F_0$ and then passing to the
		limit yields
		\[
		|c_n|\le C(n+1),
		\qquad n\ge0.
		\]
		Consequently, the series
		\[
		\varphi(z)=\sum_{n=0}^{\infty}c_nz^n
		\]
		has radius of convergence at least one and therefore defines a function in
		$H(\mathbb D)$.
		
		Fix $F(z)=\sum_{n\ge0}a_nz^n\in\mathcal R_p$. Since
		$\{\varphi_j\}$ is Cauchy sequence in multiplier norm,
		\[
		\|(\varphi_j-\varphi_k)*F\|_{H^p}
		\le \|\varphi_j-\varphi_k\|_{\mathfrak M_p}\|F\|_{\mathcal R_p},
		\]
		so $\{\varphi_j*F\}$ is Cauchy sequence in the Banach space $H^p$. Let its limit be
		$G_F\in H^p$. Convergence in $H^p$ implies convergence of every Taylor
		coefficient. Since the $n$-th coefficient of $\varphi_j*F$ is
		$c_n^{(j)}a_n$, the $n$-th coefficient of $G_F$ is $c_na_n$. Hence
		\[
		G_F=\varphi*F.
		\]
		In particular, $\varphi*F\in H^p$ for every $F\in\mathcal R_p$, so
		$\varphi\in\mathfrak M_p$.
		
		It remains to prove convergence in the multiplier norm. Given $\varepsilon>0$,
		choose $j_0$ such that
		\[
		\|\varphi_j-\varphi_k\|_{\mathfrak M_p}<\varepsilon,
		\qquad j,k\ge j_0.
		\]
		Fix $j\ge j_0$ and $F\in\mathcal R_p$. Letting $k\to\infty$ in $H^p$ gives
		\[
		\|(\varphi_j-\varphi)*F\|_{H^p}
		\le \varepsilon\|F\|_{\mathcal R_p}.
		\]
		Taking the supremum over nonzero $F$ shows that
		\[
		\|\varphi_j-\varphi\|_{\mathfrak M_p}\le\varepsilon.
		\]
		Thus, $\mathfrak M_p$ is a  Banach space.
		
		\smallskip
		\noindent\emph{Proof of \textup{(ii)}.}
		Let $\varphi(z)=\sum_{n\ge0}c_nz^n\in\mathfrak M_p$ and
		$F(z)=\sum_{n\ge0}a_nz^n\in\mathcal R_p$. Since
		$\varphi_r(z)=\sum_{n\ge0}r^nc_nz^n$, coefficientwise multiplication gives
		\[
		(\varphi_r*F)(z)
		=\sum_{n=0}^{\infty}r^nc_na_nz^n
		=(\varphi*F)(rz).
		\]
		Radial dilation is contractive on $H^p$, and hence
		\[
		\|\varphi_r*F\|_{H^p}
		=\|(\varphi*F)_r\|_{H^p}
		\le \|\varphi*F\|_{H^p}
		\le \|\varphi\|_{\mathfrak M_p}\|F\|_{\mathcal R_p}.
		\]
		Taking the supremum over $F\ne0$ yields
		\[
		\|\varphi_r\|_{\mathfrak M_p}\le\|\varphi\|_{\mathfrak M_p}.
		\]
		
		\smallskip
		\noindent\emph{Proof of \textup{(iii)}.}
		Let $|\eta|=1$. Since
		$\varphi_\eta(z)=\sum_{n\ge0}\eta^nc_nz^n$, we have
		\[
		(\varphi_\eta*F)(z)
		=\sum_{n=0}^{\infty}\eta^nc_na_nz^n
		=(\varphi*F)(\eta z).
		\]
		Rotation is an isometry of $H^p$, so
		\[
		\|\varphi_\eta*F\|_{H^p}
		=\|\varphi*F\|_{H^p}.
		\]
		Taking the supremum over $F\ne0$ gives
		$\|\varphi_\eta\|_{\mathfrak M_p}\le\|\varphi\|_{\mathfrak M_p}$.
		Applying the same inequality to $\varphi_\eta$ and $\overline\eta$ yields the
		reverse inequality, because $(\varphi_\eta)_{\overline\eta}=\varphi$.
		Therefore
		\[
		\|\varphi_\eta\|_{\mathfrak M_p}=\|\varphi\|_{\mathfrak M_p}.
		\]
		
		\smallskip
		\noindent\emph{Proof of \textup{(iv)}.}
		Let $\psi\in(H^p,H^p)$, $\varphi\in\mathfrak M_p$, and
		$F\in\mathcal R_p$. Associativity and commutativity of the Hadamard product
		imply
		\[
		(\psi*\varphi)*F=\psi*(\varphi*F).
		\]
		Since $\varphi*F\in H^p$ and $\psi$ is a coefficient multiplier of $H^p$ into
		itself, the right-hand side belongs to $H^p$. Moreover,
		\begin{align*}
			\|(\psi*\varphi)*F\|_{H^p}
			&\le \|\psi\|_{(H^p,H^p)}\|\varphi*F\|_{H^p}\\
			&\le \|\psi\|_{(H^p,H^p)}
			\|\varphi\|_{\mathfrak M_p}\|F\|_{\mathcal R_p}.
		\end{align*}
		Taking the supremum over $F\ne0$ proves both assertions in \textup{(iv)}.
		
		\smallskip
		\noindent\emph{Proof of \textup{(v)}.}
		Let
		\[
		P(z)=\sum_{n=0}^{N}d_nz^n
		\]
		be a polynomial, and let $F=\mathcal H(f)\in\mathcal R_p$. Then
		\[
		P*F(z)=\sum_{n=0}^{N}d_nm_n(f)z^n,
		\qquad
		m_n(f)=\int_0^1t^nf(t)\,dt.
		\]
		By H\"older's inequality and the Fej\'er--Riesz inequality,
		\begin{align*}
			|m_n(f)|
			&\le \left(\int_0^1|f(t)|^p\,dt\right)^{1/p}
			\left(\int_0^1t^{np'}\,dt\right)^{1/p'}\\
			&\lesssim (n+1)^{-1/p'}\|f\|_{H^p}.
		\end{align*}
		Since $\|z^n\|_{H^p}=1$, the triangle inequality gives
		\begin{align*}
			\|P*F\|_{H^p}
			&\le \sum_{n=0}^{N}|d_n|\,|m_n(f)|\\
			&\lesssim
			\left(\sum_{n=0}^{N}|d_n|(n+1)^{-1/p'}\right)
			\|f\|_{H^p}.
		\end{align*}
		Since $\|f\|_{H^p}=\|F\|_{\mathcal R_p}$, this implies that
		$P\in\mathfrak M_p$ and also gives the explicit estimate
		\[
		\|P\|_{\mathfrak M_p}
		\lesssim \sum_{n=0}^{N}|d_n|(n+1)^{-1/p'}.
		\]
		The proof is complete.
	\end{proof} 

	 The following result shows that the  family $\{\mathfrak M_p\}_{1<p<\infty}$  is  increasing with respect to  $p$, and the inclusion is proper.
	 
	 \begin{theorem}\label{thm:Mp-strict-scale}
	 	Let $1<p_0<p_1<\infty$. Then
	 	\begin{equation*}\label{eq:Mp-strict-scale}
	 		\mathfrak M_{p_0}\subsetneq\mathfrak M_{p_1}.
	 	\end{equation*}
	 	More precisely, the inclusion is continuous:
	 	\begin{equation*}\label{eq:Mp-scale-norm}
	 		\|\varphi\|_{\mathfrak M_{p_1}}
	 		\lesssim	\|\varphi\|_{\mathfrak M_{p_0}},
	 		\qquad \varphi\in\mathfrak M_{p_0}.
	 	\end{equation*}
	 \end{theorem}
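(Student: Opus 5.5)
The continuous inclusion will follow by chaining two earlier results, and strictness by reusing the lacunary function from the proof of Theorem \ref{thm:A-strict}.

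\emph{Inclusion.} Let $\varphi\in\mathfrak M_{p_0}$. By Theorem \ref{th3.3} applied with exponent $p_0$,
\[
\varphi\in H\left(p_0,\infty,\tfrac1{p_0'}\right),
\qquad
\|\varphi\|_{H(p_0,\infty,1/p_0')}\lesssim\|\varphi\|_{\mathfrak M_{p_0}}.
\]
Since $1<p_0<p_1$, Proposition \ref{pro5.1} applies with $q=p_0$ and $p=p_1$. It gives
\[
\|\varphi\|_{\mathfrak M_{p_1}}\lesssim \|\varphi\|_{H(p_0,\infty,1/p_0')}.
\]
Composing the two estimates yields $\mathfrak M_{p_0}\subset\mathfrak M_{p_1}$ with $\|\varphi\|_{\mathfrak M_{p_1}}\lesssim\|\varphi\|_{\mathfrak M_{p_0}}$. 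Proposition \ref{pro5.1} has no restriction $p>2$, so this step works for all $1<p_0<p_1<\infty$.

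\emph{Strictness.} I will use
\[
\Psi_{p_1}(z)=\sum_{j=2}^\infty\frac{2^{j/p_1'}}{j}z^{2^j}.
\]
First, $\Psi_{p_1}\in\mathfrak M_{p_1}$. Theorem \ref{thm:A-strict} is stated for $p>2$, but its membership argument uses only two facts. One is H\"older's inequality with the Fej\'er--Riesz inequality, giving $|m_{2^j}(f)|\lesssim 2^{-j/p_1'}\|f\|_{H^{p_1}}$. The other is Paley's theorem $\|\sum d_jz^{2^j}\|_{H^{p}}\asymp(\sum|d_j|^2)^{1/2}$, which holds for every $0<p<\infty$. So the same computation gives
\[
\|\Psi_{p_1}*F\|_{H^{p_1}}\lesssim\Bigl(\sum_j j^{-2}\Bigr)^{1/2}\|F\|_{\mathcal R_{p_1}}
\]
for every $p_1>1$. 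This extension to the range $1<p_1\le 2$ is the only point needing care, and I will check it explicitly.

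Second, $\Psi_{p_1}\notin\mathfrak M_{p_0}$. The $j$-th dyadic block of $\Psi_{p_1}$ is a monomial, so $\|\Delta_j\Psi_{p_1}\|_{H^{p_0}}=2^{j/p_1'}/j$. Hence
\[
2^{-j/p_0'}\|\Delta_j\Psi_{p_1}\|_{H^{p_0}}=\frac{2^{j(1/p_0-1/p_1)}}{j}\to\infty .
\]
By the dyadic characterization \eqref{eq7}, $\Psi_{p_1}\notin H(p_0,\infty,1/p_0')$, and Theorem \ref{th3.3} then gives $\Psi_{p_1}\notin\mathfrak M_{p_0}$.

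Together the two parts give $\mathfrak M_{p_0}\subsetneq\mathfrak M_{p_1}$.
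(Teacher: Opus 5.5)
Your proposal is correct. The inclusion step is the same as the paper's: Theorem \ref{th3.3} at exponent $p_0$, then Proposition \ref{pro5.1} with $q=p_0$, $p=p_1$. The strictness step uses a genuinely different witness.

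\textbf{The paper's witness.} The paper picks an intermediate exponent $s\in(p_0,p_1)$ and the undamped lacunary series $\Theta_s(z)=\sum_j 2^{j/s'}z^{2^j}$. Since $\Theta_s\in H(s,\infty,1/s')$, membership $\Theta_s\in\mathfrak M_{p_1}$ follows at once from Proposition \ref{pro5.1} with $q=s$. Non-membership in $\mathfrak M_{p_0}$ follows from Theorem \ref{th3.3} and a single coefficient estimate. So the paper uses only the two results already needed for the inclusion, and no lacunary-series norm equivalence. The price is that its witness depends on $p_0$.

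\textbf{Your witness.} You take the critical, log-damped series $\Psi_{p_1}$ at the top exponent and verify $\Psi_{p_1}\in\mathfrak M_{p_1}$ directly. This uses the moment bound $|m_{2^j}(f)|\lesssim 2^{-j/p_1'}\|f\|_{H^{p_1}}$ and the lacunary estimate, both of which you check correctly. Your extension to $1<p_1\le 2$ is sound and even easier than you suggest. There the only bound needed is $\|h\|_{H^{p_1}}\le\|h\|_{H^2}=(\sum_j|d_j|^2)^{1/2}$. Alternatively, you can invoke Theorem \ref{th3.4} directly, since $2^{-j/p_1'}\|\Delta_j\Psi_{p_1}\|_{H^{p_1}}=1/j$ shows $\Psi_{p_1}\in H(p_1,\infty,1/p_1')$.

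\textbf{What your route gains.} You get a single function, independent of $p_0$, that lies in $\mathfrak M_{p_1}$ but in no $\mathfrak M_{p_0}$ with $p_0<p_1$. In other words, you prove $\bigcup_{p_0<p_1}\mathfrak M_{p_0}\subsetneq\mathfrak M_{p_1}$. By Theorem \ref{th3.3} and Proposition \ref{pro5.1}, that union equals $\bigcup_{1<q<p_1}H(q,\infty,1/q')$. So your argument also gives the conclusion of Theorem \ref{thm:A-strict} for every $p_1>1$, not only for $p_1>2$.
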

	 
	 \begin{proof}
	 	We first prove the continuous inclusion and then construct an explicit function showing that it is strict.
	 
	 	Let $\varphi\in\mathfrak M_{p_0}$. By the  growth estimate in
	 	Theorem \ref{th3.3},
	 	\begin{equation}\label{eq:Mp0-to-growth}
	 		\|\varphi\|_{H(p_0,\infty,1/p_0')}
	 		\lesssim \|\varphi\|_{\mathfrak M_{p_0}}.
	 	\end{equation}
	 	Since $p_0<p_1$, the subcritical multiplier embedding
	 	\eqref{eq:subcritical-Mp-embedding}, applied with $q=p_0$ and $p=p_1$, gives
	 	\begin{equation}\label{eq:growth-to-Mp1}
	 		\|\varphi\|_{\mathfrak M_{p_1}}
	 		\lesssim
	 		\|\varphi\|_{H(p_0,\infty,1/p_0')}.
	 	\end{equation}
	 	Combining \eqref{eq:Mp0-to-growth} and \eqref{eq:growth-to-Mp1} we have  
	 	$	\|\varphi\|_{\mathfrak M_{p_1}}
	 	\lesssim	\|\varphi\|_{\mathfrak M_{p_0}}$, and hence
	 	$\mathfrak M_{p_0}\subset\mathfrak M_{p_1}$.

	 	Choose an exponent $s$ satisfying $	p_0<s<p_1 $
	 	and define the Hadamard-gap function
	 	\begin{equation*}\label{eq:Theta-s}
	 		\Theta_s(z)=\sum_{j=1}^{\infty}2^{j/s'}z^{2^j}.
	 	\end{equation*}
	 	Next, we show that
	 $	\Theta_s\in\mathfrak M_{p_1} $ but	$\Theta_s\notin\mathfrak M_{p_0}.	$
	 	
	 As in the proof of Theorem  \ref{thm3.5},  it is easy to check that 
	 	$\label{eq:Theta-growth-s}
	 		\Theta_s\in H\left(s,\infty,\frac1{s'}\right).
	 $
	  Since $s<p_1$, the 
	 	embedding \eqref{eq:subcritical-Mp-embedding} now yields
	 	\[
	 	\Theta_s\in\mathfrak M_{p_1}.
	 	\]
	 	
	 	It remains to show that $\Theta_s\notin\mathfrak M_{p_0}$. By
	 	Theorem \ref{th3.3}, membership in $\mathfrak M_{p_0}$ would imply
	 	membership in $H(p_0,\infty,1/p_0')$. For $N\ge1$, set
	 	\[
	 	r_N=1-2^{-N}.
	 	\]
	 	The coefficient estimate $|\widehat h(n)|r^n\le M_{p_0}(r,h)$, applied to the
	 	coefficient of $z^{2^N}$ in $\Theta_s$ we obtain
	 	\[
	 	M_{p_0}(r_N,\Theta_s)
	 	\ge 2^{N/s'}r_N^{2^N}.
	 	\]
	 	Since $(1-2^{-N})^{2^N}\gtrsim 1$, it follows that
	 $$
	 		(1-r_N)^{1/p_0'}M_{p_0}(r_N,\Theta_s)
	 		\gtrsim 2^{-N/p_0'}2^{N/s'}
	 		=2^{N(1/s'-1/p_0')}.
	 $$
	 	Because $s>p_0$, we have
	 	\[	\frac1{s'}-\frac1{p_0'}=\frac1{p_0}-\frac1s>0.	\]
	 	This implies that 
	 	$$2^{N(1/s'-1/p_0')}\longrightarrow \infty \ \ \ \text{as}\ N\rightarrow \infty.$$
	 	 Hence
	 	\[	\Theta_s\notin H\left(p_0,\infty,\frac1{p_0'}\right),	\]
	 	and therefore $\Theta_s\notin\mathfrak M_{p_0}$. This  completes the proof.
	 \end{proof}

	 As a consequence, the multiplier spaces form a strictly increasing scale:
	 \[
	 \mathfrak M_{p_0}\subsetneq\mathfrak M_{p_1}
	 \qquad(1<p_0<p_1<\infty).
	 \]
	 For $1<p\le2$ this agrees with the exact identification
	 $\mathfrak M_p=H(p,\infty,1/p')$. For $p>2$, the scale continues to increase,
	 but every $\mathfrak M_p$ remains a proper subspace of the endpoint growth
	 space $H(p,\infty,1/p')$.

	\section{Final remarks}
	
\ \ \ \  \ 	The space  $	\mathfrak M_p=(\mathcal R_p,H^p)$
is the exact symbol-derivative space for generalized Hilbert operators on $H^p$. For $1<p\le2$ it is the classical mixed growth space $H(p,\infty,1/p')$. For $p>2$ it is genuinely smaller, but it contains both the class $\mathcal A_p$ and the class $\mathcal E_p$.
	
There remain several natural open problems for further investigation. For $p>2$, it would be desirable to obtain an intrinsic characterization of $\mathfrak M_p$, possibly through a vector-valued Carleson measure condition, a tent-space norm, or an atomic decomposition.  It is also natural to investigate the compact multiplier space associated with $\mathfrak M_p$, including a characterization of compactness and a formula for the essential norm.


\section*{Conflicts of Interest}
The authors declare that there is no conflict of interest.

\section*{Funding}
The research is supported by the Natural Science Foundation of Hunan Province of China (No.
2025JJ60064) and the Scientific Research Fund of the Hunan Provincial Department of Education (Nos.24B0861 and 24C0222).




\section*{Availability of data and materials}
Data sharing not applicable to this article as no datasets were generated or analysed during
the current study: the article describes entirely theoretical research.




\pdfbookmark[1]{ References}{4}

\begin{thebibliography}{99}



 \bibitem{ave}
K. Avetisyan, A note on mixed norm spaces of analytic functions, Aust. J. Math.
Anal. Appl.  9(1) (2012) 1--6.



\bibitem{bh}
G. Bao, H. Wulan, Hankel matrices acting on Dirichlet spaces, J. Math. Anal. Appl. 409 (2014) 228--235.


\bibitem{bell}
C. Bellavita, V. Daskalogiannis, G. Stylogiannis,  On the Hilbert matrix operator: A brief survey. In:  J. Ball,  H. Tylli, J.  Virtanen,(eds) Operator Theory, Related Fields, and Applications. IWOTA 2023. Operator Theory: Advances and Applications, vol 307. Birkh\"{a}user, Cham. 







\bibitem{Blas2022}
O.~Blasco, Remarks on generalized Hilbert operators,
J. Math. Sci. 266  (2022) 274--284.






\bibitem{bok}
V. Bo\v{z}in, B. Karapetrovi\'{c}, Norm of the Hilbert matrix on Bergman spaces, J. Funct. Anal. 274 (2018) 525--543.







\bibitem{dia1}
E. Diamantopoulos, A. Siskakis, Composition operators and the Hilbert matrix, Studia Math. 140 (2000) 191--198.

\bibitem{dia2}
E. Diamantopoulos, Hilbert matrix on Bergman spaces, Illinois J. Math. 48(3) (2004) 1067--1078.




\bibitem{dos}
M. Dostani\'{c}, M. Jevti\'{c}, D. Vukoti\'{c}, Norm of the Hilbert matrix on Bergman and Hardy spaces and theorem of Nehari type, J. Funct. Anal. 254 (2008) 2800--2815.



\bibitem{b1}
P. Duren, Theory of $H^{p}$ spaces, Academic {P}ress, New {Y}ork, 1970.

\bibitem{f2}
T. Flett, The dual of an inequality of Hardy and Littlewood and some related inequalities, J. Math. Anal. Appl. 38(3) (1972) 746--765.

\bibitem{ann}
P. Galanopoulos, D. Girela,  J.   Pel\'{a}ez, A.Siskakis, Generalized Hilbert operators,  Ann. Acad. Sci. Fenn. Math. 39(1) (2014) 231--258.

\bibitem{rra}
P. Galanopoulos1,  D. Girela, Operators of Hilbert and Ces\`{a}ro type acting on Dirichlet
spaces, Rev. Real Acad. Cienc. Exactas Fis. Nat. Ser. A-Mat. 119 (2025) Paper No. 34.



\bibitem{hl}
G. Hardy, J. Littlewood, Some properties of fractional integrals II,  Math.  Z.  34 (1932) 403--439.


\bibitem{jk}
M. Jevti\'{c}, B. Karapetrovi\'{c}, Hilbert matrix on spaces of Bergman-type, J. Math. Anal. Appl.453 (2017) 241--254.



\bibitem{jev}
M. Jevti\'{c},  D. Vukoti\'{c}, M. Arsenovi\'{c}, Taylor Coefficients and Coefficient Multipliers of Hardy and
Bergman-Type Spaces, vol. 2. Springer, Cham 2016.



\bibitem{lnp}
B. {\L}anucha, M. Nowak, M. Pavlovi\'{c}, Hilbert matrix operator on spaces of analytic functions, Ann. Acad. Sci. Fenn. Math. 37 (2012) 161--174.

\bibitem{ml}
M. Lindstr\"{o}m, S. Miihkinen, N. Wikman, Norm estimates of weighted composition operators pertaining to the Hilbert
matrix, Proc. Am. Math. Soc.  147 (2019) 2425--2435.

\bibitem{mat}
M. Mateljevi\'{c},  M. Pavlovi\'{c}, $L^{p}$ behaviour of the integral means of analytic functions,  Studia Math. 77  (1984)  219--237.


\bibitem{pj}
J. Pel\'{a}ez, J.  R\"{a}tty\"{a}, Generalized Hilbert operators on weighted
Bergman spaces, Adv. Math.  240 (2013) 227--267.



\bibitem{pm}
M. Pavlovi\'{c}, Function classes on the unit disc,  An Introduction, De Gruyter Studies in
Mathematics, vol. 52, De Gruyter, Berlin, 2014.






\bibitem{tt}

P. Tang, Operators of Hilbert type acting on some spaces of analytic functions,  Rev. Real Acad. Cienc. Exactas Fis. Nat. Ser. A-Mat. to appear.




\bibitem{zg}
A.~Zygmund, Trigonometric {S}eries, Vol. II.  Cambridge {U}niversity {P}ress, London,
1959.

\end{thebibliography}
 \end{document}